\title{Limit for Quotient Convergent Graph Sequence}
\date{March 2025}
\author{Yaobin Chen\thanks{Shanghai Center for Mathematical Sciences, Fudan University, Shanghai, China. Email: {\tt ybchen21@m.fudan.edu.cn}. Supported by National Natural Science Foundation of China (Grant No. 123B2012).}
\and
Zhicheng Liu\thanks{Institute of Operations Research and Information Engineering,  Beijing University of Technology, Beijing, China. E-mail: {\tt manlzhic@emails.bjut.edu.cn}. }
\and
Yihang Xiao\thanks{Shanghai Center for Mathematical Sciences, Fudan University, Shanghai, China. E-mail: {\tt yhxiao23@m.fudan.edu.cn}. }
\and
Junchi Zhang\thanks{Shanghai Center for Mathematical Sciences, Fudan University, Shanghai, China. E-mail: {\tt jczhang24@m.fudan.edu.cn}.}
}\date{\today}
\newtheorem{thm}{Theorem}[section]
\newtheorem{ques}[thm]{Question}
\newtheorem{lem}[thm]{Lemma}
\newtheorem{prop}[thm]{Proposition}
\newtheorem{defi}[thm]{Definition}
\newtheorem{rem}[thm]{Remark}
\newtheorem{cor}[thm]{Corollary}
\newtheorem{clm}[thm]{Claim}
\begin{document}

\maketitle
\begin{abstract}
Building on the limit theory for set functions introduced by~\cite{berczi2024quotient}, we prove that the limit of convergent sequence of bounded-degree graphs' cycle matroids can be represented as the cycle matroid of a graphing, analogous to the completeness result for local-global convergence in~\cite{MR3177383}.
\end{abstract}

\section{Introduction}

The \textit{cycle matroid} is a fundamental concept in both matroid theory and graph theory. It serves as a bridge between these two fields, providing a powerful framework for solving combinatorial and optimization problems.
In~\cite{thematroidofagraphing}, Lov{\'a}sz extended the notion of cycle matroids to graphing, which is defined as follows.

A \textbf{Borel graph} is a triple $(\Omega,\mathcal{B},E)$, where $(\Omega,\mathcal{B})$ is a \textit{standard Borel space} and $E$ is a symmetric Borel subset in $\Omega\times \Omega$.
A \textbf{graphing} is a bounded-degree Borel graph $\mathbf{G}=(\Omega,\mu ,E)$, where $(\Omega,\mathcal{B},E)$ is a Borel graph and $\mu$ is an probability measure on $(\Omega,\mathcal{B})$ that satisfies the \textit{involution invariant} property.
The rank function of a graphing $\mathbf{G}$ is defined as
$$\rho_{\mathbf{G}}(F)\triangleq1-\mathbb{E}_x[\frac{1}{|V(\mathbf{G}[F]_x)|}], \, \forall \,\text{ Borel edge subset } F,$$
where $x$ is a random point from $\mu$, $\mathbf{G}[F]_x$ denotes the connected component containing $x$ in the induced subgraphing  
$\mathbf{G}[F]=(\Omega,\mathcal{B},\mu ,F)$, and $V(G[F]_x)$ denotes the set of vertices of this component.

In~\cite{berczi2024quotient}, Krist{\'o}f B{\'e}rzi, M{\'a}rton Borb{\'e}nyi, L{\'a}szl{\'o} Lov{\'a}sz and L{\'a}szl{\'o} M{\'a}rton T{\'o}th developed a limit theory for the cycle matroids of graphings as below.
For a set function $\phi$ on a set-algebra $(\mathcal{J},\mathcal{B})$, 
a \textbf{$k$-quotient} of $\varphi$ is the function $\phi \circ F^{-1}$, where $F:J\rightarrow[k]$ is a measurable map. We denote by $\mathcal{Q}_k(\phi)$ the set of all $k$-quotients of $\phi$. A sequence of set functions $(\phi_1,\phi_2,\ldots )$ is said to \textbf{quotient converge} to $\phi$ if, for all $k\in \mathbb{N}$, $\mathcal{Q}_k(\phi_n)\rightarrow \mathcal{Q}_k(\phi)$ in the Hausdorff distance as $n\rightarrow \infty$.
In~\cite{berczi2024cycle}, it is shown by Krist{\'o}f B{\'e}rzi, M{\'a}rton Borb{\'e}nyi, L{\'a}szl{\'o} Lov{\'a}sz and L{\'a}szl{\'o} M{\'a}rton T{\'o}th that the local-global convergence (we do not explain the definition for brevity) of a sequence of finite graphs to a graphing implies convergence of the corresponding matroids. 
\begin{thm}\rm{([\cite{berczi2024cycle}, Theorem~3.2])}
Let $(G_i)_{i=1}^\infty$ be a sequence of finite graphs with maximal degree bounded by $D$ that converges to
a graphing $\mathbf{G}$ in the local-global sense. Then the rank function $\rho_i$ of $G_i$ quotient converges to $\rho_\mathbf{G}$.
\end{thm}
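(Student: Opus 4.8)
The plan is to establish that the cycle-matroid rank function is a \emph{local-global observable}: for each $k$, its $k$-quotients are uniformly well approximated by a single Lipschitz function of the colored-neighbourhood statistics that define local-global convergence, and this function is \emph{the same} for the graphing $\mathbf G$ and for every finite graph $G_i$. Quotient convergence then follows from local-global convergence by pushing forward through this map.

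First I would pass from colorings of vertices to colorings of edges. A $k$-quotient of $\rho_{\mathbf G}$ is $\rho_{\mathbf G}\circ F^{-1}\in\mathbb R^{2^{[k]}}$ for a Borel map $F\colon E(\mathbf G)\to[k]$ --- a coloring of the \emph{edges} --- whereas local-global convergence is phrased via colorings of the \emph{vertices}. In bounded degree $D$ this is a routine reduction: an edge coloring $c\colon E\to[k]$ induces a vertex coloring assigning to each $v$ the list of colors of its $\le D$ incident edges in a fixed local order, and conversely a vertex coloring satisfying the obvious ``agree along each edge'' constraint recovers an edge coloring; moreover the colored radius-$r$ neighbourhood distribution of $c$ is a fixed continuous function of that of its vertex lift. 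Hence $G_i\to\mathbf G$ in the local-global sense implies that, for every $r$ and $k$, the set of radius-$r$ edge-$k$-colored neighbourhood distributions of $G_i$ converges in Hausdorff distance to the corresponding set for $\mathbf G$. (Alternatively, subdivide every edge once: this keeps degrees bounded, turns edge colorings into vertex colorings, and only requires tracking the convergence of $|E(G_i)|/|V(G_i)|$.)

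Next comes truncation and locality. For any Borel edge set $F'$ and a random vertex $x$,
\[
1-\rho(F')=\mathbb E_x\!\Big[\tfrac1{|V(\mathbf G[F']_x)|}\Big]=\sum_{j=1}^{N}\tfrac1j\,\Pr\nolimits_x\big[|V(\mathbf G[F']_x)|=j\big]+R_N(F'),\qquad 0\le R_N(F')\le\tfrac1{N+1},
\]
because infinite components contribute $0$ and $\sum_j\Pr_x[|V|=j]\le1$; the same identity and bound hold for every finite $G_i$ with $x$ uniform, and the error bound is uniform in $F'$ and in the graph. For $j\le N$, the event $\{|V(\mathbf G[F']_x)|=j\}$ is determined by the radius-$N$ ball of $x$ in $\mathbf G$ together with the edge coloring restricted to it, since a connected subgraph on at most $N$ vertices, and every edge meeting it, lies inside that ball. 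Thus, for fixed $k$ and $N$ there is a single map $\Psi_{k,N}$ --- the restriction of a linear map, hence $1$-Lipschitz from the $\ell_1$ metric on distributions to the $\ell_\infty$ metric --- on the space of radius-$N$ edge-$k$-colored neighbourhood distributions, sending the distribution induced by an edge coloring $F\colon E\to[k]$ to the vector $\big(\sum_{j\le N}\tfrac1j\Pr_x[|V(\mathbf G[F^{-1}(T)]_x)|=j]\big)_{T\subseteq[k]}$; up to the $\ell_\infty$-isometry $u\mapsto\mathbf 1-u$ this vector lies within distance $\tfrac1{N+1}$ of the true $k$-quotient $\rho_{\mathbf G}\circ F^{-1}$, and the identical statement, with the same $\Psi_{k,N}$, holds for $\rho_i$.

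Finally I would assemble the pieces. Fix $\epsilon>0$ and $N>1/\epsilon$. By the first step, the set $\mathcal D_i$ of radius-$N$ edge-$k$-colored neighbourhood distributions of $G_i$ converges in Hausdorff distance to the corresponding set $\mathcal D_\infty$ of $\mathbf G$. By the second step, the Hausdorff distance between $\mathcal Q_k(\rho_i)$ and $\Psi_{k,N}(\mathcal D_i)$ (after the isometry $u\mapsto\mathbf 1-u$) is at most $\tfrac1{N+1}<\epsilon$, and likewise for $\mathbf G$; since $\Psi_{k,N}$ is the same $1$-Lipschitz map on both sides, the Hausdorff distance between $\mathcal Q_k(\rho_i)$ and $\mathcal Q_k(\rho_{\mathbf G})$ is at most the Hausdorff distance between $\mathcal D_i$ and $\mathcal D_\infty$ plus $2\epsilon$, which tends to $2\epsilon$ as $i\to\infty$. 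Letting $\epsilon\to0$ gives $\mathcal Q_k(\rho_i)\to\mathcal Q_k(\rho_{\mathbf G})$ for all $k$, i.e.\ quotient convergence. I expect the main obstacle to be not any individual estimate but the bookkeeping of the first step together with the measure-theoretic care on the graphing side: one must check that events such as ``$x$ lies in a finite $F^{-1}(T)$-component of size $j$'' are Borel, that the probabilities $\Pr_x$ are consistently taken with respect to the invariant vertex measure of $\mathbf G$ and the uniform measure of $G_i$, and that the truncated-sum formula is literally the same function of the neighbourhood statistics on both sides. Bounded degree makes the relevant spaces of radius-$N$ colored balls finite, so the linearity, Lipschitz, and uniformity claims come essentially for free. (One could also bypass the last two steps by invoking the classical fact that the normalized number of connected components is a local-global graph parameter and transporting it through the identity $1-\rho(F')=\#\{\text{components of }F'\}/|V(G_i)|$ for finite graphs and its measure-theoretic analogue for graphings.)
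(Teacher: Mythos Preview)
The paper does not contain a proof of this statement: it is quoted verbatim as a result of B\'erczi, Borb\'enyi, Lov\'asz and T\'oth \cite{berczi2024cycle} and used only as motivation for the paper's own completeness result (\Cref{quotientconvergentlimit}); the authors even remark that they ``do not explain the definition [of local-global convergence] for brevity.'' So there is no in-paper proof to compare your proposal against.

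That said, your outline is the natural one and is essentially how the cited result is proved. The key observation you identify --- that $\rho(F')=1-\mathbb E_x[1/|\mathbf G[F']_x|]$ admits a truncation at component size $N$ with uniform error $O(1/N)$, and that the truncated quantity is a fixed continuous (indeed linear) function of the radius-$N$ colored-ball statistics --- is exactly what makes the rank function a local-global parameter. Your reduction from edge to vertex colorings in bounded degree is standard, and the Lipschitz/Hausdorff bookkeeping in the final step is routine. The measure-theoretic caveats you flag (Borelness of the component-size events, matching the invariant measure on $\mathbf G$ with the uniform measure on $G_i$) are genuine but not serious in bounded degree, where the space of radius-$N$ $k$-colored balls is finite. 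One small clarification: to certify that the $F'$-component of $x$ has size \emph{exactly} $j\le N$ you need the radius-$j$ ball (to see that no $F'$-edge leaves the candidate component), not just radius $j-1$; your choice of radius $N$ already covers this.
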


Moreover, in~\cite{MR3177383}, Hamed Hatami, L{\'a}szl{\'o} Lov{\'a}sz, and Bal{\'a}zs Szegedy prove the following completeness result.
\begin{thm}\rm{([\cite{MR3177383}, Theorem~3.2])}\label{lovaszcomplete}
Let $(G_i)_{i=1}^\infty$ be a local-global convergent sequence of finite graphs with maximal degree bounded by $D$. Then there exists a graphing $\mathbf{G}$ such that the $(G_i)_{i=1}^\infty$ is local-global convergent to $\mathbf{G}$.
\end{thm}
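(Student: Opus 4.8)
The plan is to realize the limit object by an ultraproduct (Loeb space) construction and then descend to a standard Borel probability space. First I would unpack local-global convergence into a countable family of parameters: for integers $r,k\ge 1$ let $\mathcal{B}^{D}_{r,k}$ be the finite set of isomorphism types of $k$-colored rooted graphs of radius at most $r$ and maximum degree at most $D$, and let $Q_{r,k}(G)$ be the set of distributions on $\mathcal{B}^{D}_{r,k}$ obtained as the law of the colored $r$-ball around a uniformly random vertex, as the coloring $c\colon V(G)\to[k]$ ranges over all maps; this is a closed (indeed finite) subset of the compact simplex $\mathcal{P}(\mathcal{B}^{D}_{r,k})$. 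Local-global convergence of $(G_i)$ means precisely that for every $(r,k)$ the sequence $Q_{r,k}(G_i)$ converges, in the Hausdorff metric on the compact hyperspace of closed subsets of $\mathcal{P}(\mathcal{B}^{D}_{r,k})$, to a closed set $Q^{\infty}_{r,k}$. Since there are only countably many pairs $(r,k)$, a single metric encodes all of this, and the goal is to produce a graphing $\mathbf{G}$ with $Q_{r,k}(\mathbf{G})=Q^{\infty}_{r,k}$ for every $r,k$, where for graphings $Q_{r,k}(\mathbf{G})$ is the closure of the set of $r$-ball laws over all Borel $k$-colorings.

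Next I would build such a graphing. Fix a non-principal ultrafilter $\mathcal{U}$ on $\mathbb{N}$, form the ultraproduct $X=\prod_i V(G_i)/\mathcal{U}$ equipped with the Loeb measure $\mu_L$ built from the normalized counting measures, and let $E\subseteq X\times X$ be the ultraproduct of the edge sets. Then $(X,E)$ is a bounded-degree graph, $E$ is Loeb-measurable, and the uniform degree bound together with Fubini for the Loeb measure makes the induced edge measure involution-invariant; for every \emph{internal} coloring $c=(c_i)/\mathcal{U}$ with $c_i\colon V(G_i)\to[k]$, the law of the colored $r$-ball around a $\mu_L$-random point is exactly the $\mathcal{U}$-limit of the laws of $(G_i,c_i)$, hence lies in $Q^{\infty}_{r,k}$. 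The Loeb space is not standard Borel, so the crux is to choose a countably generated sub-$\sigma$-algebra containing $E$ and enough colorings, separating points on a conull set, and to apply the standard \enquote{push to a Polish quotient} machinery to obtain a graphing $\mathbf{G}=(\Omega,\mathcal{B},\mu,E')$ on a standard Borel probability space carrying the same bounded-degree involution-invariant graph structure; this uses that a bounded-degree Borel graph on a Loeb space admits a Borel model together with a measurable selection argument.

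Having constructed $\mathbf{G}$, I would verify $Q_{r,k}(\mathbf{G})=Q^{\infty}_{r,k}$ for all $r,k$; since the $\mathcal{U}$-limit coincides with the ordinary limit for a convergent sequence, this gives $G_i\to\mathbf{G}$ in the local-global sense. For $Q_{r,k}(\mathbf{G})\subseteq Q^{\infty}_{r,k}$: approximate an arbitrary Borel coloring $c\colon\Omega\to[k]$ by colorings measurable with respect to finite sub-algebras, lift these along the quotient map to Loeb-measurable colorings and then, by density of internal functions among Loeb-measurable functions, to internal colorings $(c_i)$, so the $r$-ball law of $(\mathbf{G},c)$ is a limit of points of $Q^{\infty}_{r,k}$, which is closed. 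For $Q^{\infty}_{r,k}\subseteq Q_{r,k}(\mathbf{G})$: given an accumulating sequence of colorings $c_i$ of $G_i$ realizing a target point, the internal coloring $(c_i)/\mathcal{U}$ descends — after ensuring the chosen sub-$\sigma$-algebra is rich enough to capture it — to a Borel coloring of $\mathbf{G}$ realizing that point.

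The main obstacle will be the descent to a standard Borel graphing together with this two-way matching of colored profiles: one must enlarge the countably generated sub-$\sigma$-algebra so it simultaneously sees $E$ and, for every $(r,k)$, enough colorings to realize each point of $Q^{\infty}_{r,k}$, and then confirm that the resulting quotient genuinely satisfies the measure-preservation and involution-invariance demanded of a graphing. A variant avoiding ultraproducts would first extract the Benjamini--Schramm limit (the case $k=1$, all $r$), represent it by a graphing via the known fact that bounded-degree unimodular random graphs are graphing-representable, and then match the colored statistics by a measurable-selection and approximation argument on that graphing; the difficulty there is essentially the same, namely producing Borel colorings of the limit graphing that reproduce the adversarial colorings witnessing $Q^{\infty}_{r,k}$.
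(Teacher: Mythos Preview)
This theorem is not proved in the present paper; it is quoted from Hatami--Lov\'asz--Szegedy \cite{MR3177383}. The paper does, however, describe and adapt the method of that reference when proving its own Theorem~\ref{quotientconvergentlimit} in Section~5, so there is a natural point of comparison.

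Your ultraproduct/Loeb-space approach is a genuinely different route from the one in \cite{MR3177383} (and from the paper's adaptation of it). Their construction is direct and explicit: one builds a compact, totally disconnected Polish space $\Omega$ whose points are isomorphism classes of rooted connected graphs of degree $\le D$ decorated by a compact colour space $K$, equips $\Omega$ with a natural Borel edge relation, embeds each $G_i$ into $\Omega$ via an explicit decoration, pushes the uniform measure on $V(G_i)$ forward to a probability measure $\mu_{G_i}$ on $\Omega$, and then invokes Prokhorov's theorem on the compact space $\Omega$ to extract a weak limit $\mu$; the limit graphing is $(\Omega,\mathcal{B},\mu,E)$. The decoration space $K$ is chosen rich enough (a product over all $(k,n)$ of finite $\epsilon$-nets of colourings) that every colouring of every $G_i$ is already approximately encoded in the decoration, so the inclusion $Q^{\infty}_{r,k}\subseteq Q_{r,k}(\mathbf{G})$ comes essentially for free by reading off a coordinate of the decoration, and the reverse inclusion comes from approximating Borel colourings by continuous ones (Lemma~\ref{continuouscoloring}). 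What this buys is that one never leaves the category of standard Borel spaces and never needs to descend from a non-separable object.

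Your plan is workable in outline and is in the spirit of the Elek--Szegedy ultraproduct constructions, but the step you flag as the main obstacle really is one: the descent from the Loeb space to a standard Borel graphing while simultaneously retaining, for every $(r,k)$, enough colourings to hit a countable dense subset of $Q^{\infty}_{r,k}$ requires a careful back-and-forth enlargement of the countably generated sub-$\sigma$-algebra, and the claim that internal colourings descend to Borel colourings of the quotient needs more than ``density of internal functions'' --- one must check compatibility with the particular separating algebra chosen. None of this is wrong, but it is substantially more delicate than the explicit compact-model construction of \cite{MR3177383}, which sidesteps the descent entirely.
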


Hence, the following natural question is raised: Is the limit of cycle matroids of bounded-degree graph always a cycle matroid of some graphing? We prove it is true as follows.
\begin{thm}\label{quotientconvergentlimit}
Given $D\in \mathbb{N}^{*}$, let $\{G_i\}_{i=1}^\infty$ be a sequence of connected graphs with maximal degree bounded by $D$. Let $\rho_i$ be the rank function of the cycle matroid of $G_i$ as a graphing. If $\{\rho_i\}_{i=1}^\infty$ is quotient convergent, then there exists a graphing $\mathbf{G}$, such that the rank function $\rho_\mathbf{G}$ is a limit of $\rho_i$.
\end{thm}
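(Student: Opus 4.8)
The plan is to deduce Theorem~\ref{quotientconvergentlimit} from the completeness result for local-global convergence (Theorem~\ref{lovaszcomplete}) together with the implication from~\cite{berczi2024cycle} that local-global convergence of finite graphs forces quotient convergence of the rank functions of their cycle matroids ([\cite{berczi2024cycle}, Theorem~3.2]). In other words, instead of constructing an abstract quotient-limit object from scratch, I would realize the limit as a graphing by piggybacking on the already available graph-to-graphing limit theory.

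First I would invoke the compactness of the local-global limit space established in~\cite{MR3177383}: for a fixed degree bound $D$, the class of finite graphs with maximum degree at most $D$ is sequentially precompact in the local-global topology. Hence the given sequence $\{G_i\}_{i=1}^\infty$ has a subsequence $\{G_{i_j}\}_{j=1}^\infty$ that is local-global convergent, and by Theorem~\ref{lovaszcomplete} there is a graphing $\mathbf{G}$ with degree bound $D$ to which $\{G_{i_j}\}_{j=1}^\infty$ converges in the local-global sense. Applying [\cite{berczi2024cycle}, Theorem~3.2] to this subsequence, the rank functions $\rho_{i_j}$ quotient converge to $\rho_\mathbf{G}$; that is, for every $k\in\mathbb{N}$ we have $\mathcal{Q}_k(\rho_{i_j})\to \mathcal{Q}_k(\rho_\mathbf{G})$ in the Hausdorff distance as $j\to\infty$. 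Note that the full sequence $\{G_i\}_{i=1}^\infty$ need not itself be local-global convergent, since quotient convergence is coarser; but only a convergent subsequence of graphs is needed here.

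Next I would upgrade this from the subsequence to the whole sequence. For a fixed $k$, every $k$-quotient $\rho_i\circ F^{-1}$ is a set function on $2^{[k]}$ with values in $[0,1]$, hence a point of the fixed compact cube $C_k=[0,1]^{2^{[k]}}$, and $\mathcal{Q}_k(\rho_i)$ and $\mathcal{Q}_k(\rho_\mathbf{G})$ are compact subsets of $C_k$; therefore the family of non-empty compact subsets of $C_k$ with the Hausdorff distance is a compact metric space, and quotient convergence is convergence in the metrizable product of these spaces over all $k$. By hypothesis $\{\rho_i\}_{i=1}^\infty$ is quotient convergent, so for every $k$ the sequence $\bigl(\mathcal{Q}_k(\rho_i)\bigr)_{i}$ converges; since it has the subsequence $\bigl(\mathcal{Q}_k(\rho_{i_j})\bigr)_{j}$ converging to $\mathcal{Q}_k(\rho_\mathbf{G})$, uniqueness of limits forces $\mathcal{Q}_k(\rho_i)\to\mathcal{Q}_k(\rho_\mathbf{G})$. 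As this holds for all $k$, the sequence $\{\rho_i\}_{i=1}^\infty$ quotient converges to $\rho_\mathbf{G}$, i.e.\ $\rho_\mathbf{G}$ is a limit of $\{\rho_i\}_{i=1}^\infty$, which is the assertion.

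The only genuinely non-formal ingredient — hence the main obstacle — is the first step: the existence of a local-global convergent subsequence, which rests on the compactness of the local-global limit space from~\cite{MR3177383} and is exactly what makes Theorem~\ref{lovaszcomplete} applicable; everything afterwards is bookkeeping about convergence in compact metric spaces plus the citation [\cite{berczi2024cycle}, Theorem~3.2]. I would also take care to align the normalization conventions, checking that the finite-graph rank function $\rho_i(F)=1-c(F)/|V(G_i)|$ (with $c(F)$ the number of components of $(V(G_i),F)$) is precisely the normalized rank function under which [\cite{berczi2024cycle}, Theorem~3.2] relates $\rho_i$ to $\rho_\mathbf{G}(F)=1-\mathbb{E}_x[1/|V(\mathbf{G}[F]_x)|]$; this is a routine check rather than a real difficulty.
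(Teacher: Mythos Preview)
Your argument is correct and takes a genuinely different route from the paper. The paper constructs the limiting graphing directly: it builds a universal edge-decorated Borel graph $\mathbf{H}^K$ whose decoration space $K$ encodes, for every finite graph, a countable net of $k$-edge-colorings; it then embeds each $G_i$ into $\mathbf{H}^K$ via an injective decoration, uses Prokhorov's theorem to pass to a weak limit of the associated measures, and verifies by hand the two inclusions $\mathcal{Q}_k(\mathbf{G})\subseteq \overline{\lim_i \mathcal{Q}_k(G_i)}$ and $\overline{\mathcal{Q}_k(\mathbf{G})}\supseteq \lim_i \mathcal{Q}_k(G_i)$ using continuity of the relevant colorings. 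By contrast, you treat the problem as pure bookkeeping once the local-global theory is in place: pass to a local-global convergent subsequence by precompactness, realize its limit as a graphing via Theorem~\ref{lovaszcomplete}, pull quotient convergence of the rank functions along the subsequence from \cite[Theorem~3.2]{berczi2024cycle}, and then upgrade to the full sequence by uniqueness of limits in the pseudometric $d_Q$.

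What each buys: your route is shorter and shows that Theorem~\ref{quotientconvergentlimit} is a formal consequence of results already in the literature; the only substantive input is the sequential precompactness of bounded-degree graphs under local-global convergence, which is indeed established in~\cite{MR3177383}. The paper's route is more self-contained and, more importantly, tailors the limit construction to the quotient topology itself (the decoration space is built out of $\epsilon$-nets of $k$-colorings), so it does not pass through the finer local-global topology at all; this matters if one wants an intrinsic proof or hopes to adapt the method beyond the bounded-degree setting, as the concluding question of the paper suggests. One small point: you do not need $\mathcal{Q}_k(\rho_\mathbf{G})$ to be compact for the Hausdorff-limit argument; it suffices that the full sequence is Cauchy in $d_{Haus}$ and that a subsequence converges to $\mathcal{Q}_k(\rho_\mathbf{G})$, which is what you actually use.
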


\section{Preliminaries}\label{Preliminaries}

A \textbf{Borel graph} is a triple $\mathbf{G}=(\Omega,\mathcal{B},E)$ such that
\begin{enumerate}
        \item $E\subseteq \Omega\times \Omega$ is symmetric. That is, $(x,y)\in E$ if and only if $(y,x)\in E$.
    \item $(\Omega,\mathcal{B})$ is a standard Borel space, and $E$ is a Borel subset of $\Omega\times \Omega$.
\end{enumerate}

Here the Borel $\sigma$-algebra of $\Omega\times \Omega$ is given by $\sigma(\mathcal{B}\times \mathcal{B})$, the $\sigma$-algebra generated by $\mathcal{B}\times \mathcal{B}$.

In this paper, we require that $(x,x)\notin E$ for all $x\in \Omega$. Moreover, we can view $E$ as a quotient space of $\Omega\times \Omega$ such that $(x,y)\sim(y,x)$ in $E$. Then $E\subseteq \tbinom{\Omega}{2}$ and the topology on $E$ we consider is induced by the quotient. We use $e=[(x,y)]$ to denote the equivalence class containing $(x,y)$ in $E$. 

Given a measurable space $(X,\mathcal{B})$ and a $\sigma$-algebra $\mathcal{A}\subseteq 2^{X}$, for any Borel subset $Y\subseteq X$, we define the \textbf{restricted $\sigma$-algebra} of $\mathcal{B}$ on $Y$ as $\mathcal{A}|_Y \triangleq \{A\cap Y \mid A \in \mathcal{A}\}.$ 

\begin{lem}\label{polishspaceseperable}\rm{([\cite{pikhurko2008borel}, Lemma~3.4,~Corollary~3.11])}We give some basic properties of Polish space.
\begin{enumerate}[label=\rm{(T\arabic*)}]
    \item For any standard Borel space $(\Omega,\mathcal{B})$, there exists a countable family $\{J_n\in \mathcal{B} \mid n\in \mathbb{N}^{*}\}$, such that the Borel $\sigma$-algebra $\mathcal{B} = \sigma(\{J_n\in \mathcal{B} \mid n\in \mathbb{N}^{*}\})$, the $\sigma$-algebra generated by $\{J_n\in \mathcal{B} \mid n\in \mathbb{N}^{*}\}$.
    \item\label{substandard} If $(\Omega,\mathcal{B})$ is a standard Borel space and $Y \in \mathcal{B}$, then $(Y,\mathcal{B}|_Y)$
 is a standard Borel space.
\end{enumerate}
\end{lem}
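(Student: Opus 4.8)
The plan is to treat the two assertions separately: (T1) is immediate from second countability of Polish spaces, while (T2) is the substantive point and I would obtain it via the classical change-of-topology technique of descriptive set theory.

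For (T1), fix a topology $\tau$ witnessing that $(\Omega,\mathcal{B})$ is standard Borel and a compatible complete metric $d$. Since $(\Omega,\tau)$ is separable metric it is second countable: taking a countable dense set $\{x_k\}$, the balls $\{B(x_k,q)\mid k\in\mathbb{N}^{*},\ q\in\mathbb{Q}_{>0}\}$ form a countable base, which I enumerate as $\{J_n\}_{n\in\mathbb{N}^{*}}$. Every $\tau$-open set is a union of basic open sets, hence a \emph{countable} union of $J_n$'s, so it lies in $\sigma(\{J_n\})$. Therefore $\mathcal{B}=\sigma(\tau)\subseteq\sigma(\{J_n\})\subseteq\mathcal{B}$, giving $\mathcal{B}=\sigma(\{J_n\})$.

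For (T2), I would first record two standard facts. (a) An open subset $U$ and a closed subset $F$ of a Polish space are each Polish in the subspace topology: a closed subspace of a complete metric space is complete, and for $U$ one replaces $d$ by the equivalent metric $d(x,y)+\left|\tfrac{1}{d(x,U^c)}-\tfrac{1}{d(y,U^c)}\right|$, under which $U$ is complete. (b) If $\tau\subseteq\tau_1\subseteq\tau_2\subseteq\cdots$ are Polish topologies on a set $X$ with $\sigma(\tau_n)=\sigma(\tau)$ for every $n$, then the topology $\tau_\infty$ generated by $\bigcup_n\tau_n$ is Polish and $\sigma(\tau_\infty)=\sigma(\tau)$ — one embeds $(X,\tau_\infty)$ diagonally into $\prod_n(X,\tau_n)$, notes the image is closed there (a limit along the diagonal is, after composing with the coarser $\tau_i\subseteq\tau_j$, unique by the Hausdorff property), and the second-countability subbase $\bigcup_n(\text{base of }\tau_n)$ shows no new Borel sets appear. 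Using (a)–(b), consider the family $\mathcal{A}$ of all $B\in\mathcal{B}$ admitting a Polish topology $\tau_B\supseteq\tau$ on $\Omega$ with $\sigma(\tau_B)=\mathcal{B}$ in which $B$ is clopen. Every $\tau$-closed $F$ lies in $\mathcal{A}$: equip $F$ and $\Omega\setminus F$ with their (Polish) subspace topologies and let $\tau_F$ be the disjoint-union topology, which is Polish, refines $\tau$, makes $F$ clopen, and has the same Borel sets since $F\in\mathcal{B}$. Closure of $\mathcal{A}$ under complements is trivial; for countable unions, given $B_1,B_2,\ldots\in\mathcal{A}$ with witnesses $\tau_{B_i}$, inductively let $\tau_i'$ be generated by $\tau_{B_1}\cup\cdots\cup\tau_{B_i}$ (still Polish, same Borel sets, with $B_1,\ldots,B_i$ all clopen), apply (b) to $\{\tau_i'\}$ to get a Polish $\tau_\infty\supseteq\tau$ with $\sigma(\tau_\infty)=\mathcal{B}$ in which every $B_i$, hence $\bigcup_i B_i$, is open, then pass to the disjoint-union refinement of $\tau_\infty$ across the $\tau_\infty$-open (hence Polish) set $\bigcup_i B_i$ to make it clopen. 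Thus $\mathcal{A}$ is a $\sigma$-algebra containing $\sigma(\tau)=\mathcal{B}$, so $\mathcal{A}=\mathcal{B}$. Now for $Y\in\mathcal{B}$ pick such a witness $\tau_Y$: Polish, $\sigma(\tau_Y)=\mathcal{B}$, and $Y$ clopen in $\tau_Y$; then $(Y,\tau_Y|_Y)$ is a closed subspace of a Polish space, hence Polish. Finally, for any family $\mathcal{C}$ of subsets of $\Omega$ and any $Y\subseteq\Omega$ one has $\sigma_Y(\{C\cap Y\mid C\in\mathcal{C}\})=\{A\cap Y\mid A\in\sigma(\mathcal{C})\}$ — the right side is a $\sigma$-algebra on $Y$ containing the generators, and $\{A\subseteq\Omega\mid A\cap Y\in\sigma_Y(\mathcal{C}|_Y)\}$ is a $\sigma$-algebra on $\Omega$ containing $\mathcal{C}$. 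Applying this with $\mathcal{C}=\tau_Y$ gives that the Borel $\sigma$-algebra of $(Y,\tau_Y|_Y)$ equals $\sigma(\tau_Y)|_Y=\mathcal{B}|_Y$, so $(Y,\mathcal{B}|_Y)$ is standard Borel.

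The main obstacle is fact (b) — that a countable increasing join of Polish topologies with a common Borel $\sigma$-algebra is again Polish with that same $\sigma$-algebra — together with the bookkeeping needed to keep the Borel structure unchanged at each inductive step; both are classical (see Kechris, \emph{Classical Descriptive Set Theory}, §13, or \cite{pikhurko2008borel}), so in the write-up one may either reproduce the diagonal-embedding argument or simply cite it and concentrate on the reduction above.
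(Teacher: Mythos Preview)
Your proposal is correct. The paper itself does not prove this lemma at all --- it is stated with an external citation (\cite{pikhurko2008borel}, Lemma~3.4 and Corollary~3.11) and no argument is given --- so there is no in-paper proof to compare against. Your approach is the standard one from descriptive set theory (essentially Kechris, \S13): (T1) via second countability, and (T2) via the change-of-topology lemma showing every Borel set can be made clopen in a finer Polish topology with the same Borel $\sigma$-algebra. This is precisely the content behind the cited references, so your write-up simply unpacks what the paper defers to the literature.
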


\begin{lem}\label{inducedpolishspcae}We list some basic properties.
\begin{enumerate}[label={\rm{(P\arabic*)}}]
\item\label{inducedpolishspcae1} If $(\Omega, \mathcal{B})$ is a standard Borel space, then $(\Omega\times\Omega, \sigma(\mathcal{B}\times \mathcal{B}))$ and $\Omega\times \Omega/_\sim$ with the topology induced by the quotient such that $(x,y)\sim(y,x)$ in $\Omega$ are both standard Borel spaces.
\item\label{inducedpolishspcae4} 
For any Borel subset $X\subseteq \Omega$, $(X,\mathcal{B}|_X,E\cap (X\times X)/_\sim)$ is a Borel graph, called \textbf{the induced Borel graph of $(\Omega,\mathcal{B},\mu,E)$ on $X$}.
For any symmetric Borel subset $F\subseteq E$, $(\Omega,\mathcal{B},F)$ is also a Borel graph, called \textbf{the induced Borel graph of $(\Omega,\mathcal{B},\mu,E)$ on $F$}.
\item\label{inducedpolishspcae5} If we view $E\subseteq \tbinom{\Omega}{2}=\Omega\times \Omega/_\sim$ with the topology induced by the quotient and let $\mathcal{B}_E=\{X/_\sim\subseteq E\subseteq \tbinom{\Omega}{2} \mid X\in \sigma(\mathcal{B}\times \mathcal{B})$ and $X\text{ is symmetric}\}$, then $(E,\mathcal{B}_E)$ is a standard Borel space. Moreover, $\mathcal{B}_E$ is generated by a countable set. That is, there exists a countable sequence of Borel subsets of $E$, denoted by $\{F_1,F_2,\ldots,F_r,\ldots\}$, such that $\sigma(\{F_1,F_2,\ldots,F_r,\ldots\})=\mathcal{B}_E$.

\end{enumerate}
\end{lem}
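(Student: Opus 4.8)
The plan is to prove the three parts (P1)--(P3) in turn, reducing each to standard descriptive set theory: finite products of Polish spaces are Polish and carry the product $\sigma$-algebra as their Borel $\sigma$-algebra; Borel subsets of standard Borel spaces are standard Borel (Lemma~\ref{polishspaceseperable}\ref{substandard}); and, by the Lusin--Souslin theorem (see \cite{Kechris1995Classical}), an injective Borel map between standard Borel spaces carries Borel sets to Borel sets. I would prove (P1) first, then deduce (P3), and finally read off (P2).

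For (P1), fix a Polish topology $\tau$ on $\Omega$ compatible with $\mathcal{B}$ and a compatible complete metric $d$, and put $d_1\big((x_1,x_2),(y_1,y_2)\big):=d(x_1,y_1)+d(x_2,y_2)$, a complete separable metric on $\Omega\times\Omega$; then $(\Omega\times\Omega,\tau\times\tau)$ is Polish, and since $\tau$ is second countable its Borel $\sigma$-algebra is $\sigma(\mathcal{B}\times\mathcal{B})$, so $(\Omega\times\Omega,\sigma(\mathcal{B}\times\mathcal{B}))$ is standard Borel. Writing $s(x,y)=(y,x)$ for the coordinate swap, $d_1$ is $s$-invariant, so
$$\bar d\big([(x_1,x_2)],[(y_1,y_2)]\big):=\min\big\{\,d_1\big((x_1,x_2),(y_1,y_2)\big),\ d_1\big((x_1,x_2),(y_2,y_1)\big)\,\big\}$$
is well defined on $\Omega\times\Omega/_\sim$; symmetry and the triangle inequality are routine, and $\bar d([p],[p'])=0$ forces $p'\in\{p,sp\}$, i.e. $[p]=[p']$, precisely because the $\sim$-classes are the sets $\{p,sp\}$. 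I would then check that $q$ is $1$-Lipschitz and open (whence $q\big(B_{d_1}(p,r)\big)=B_{\bar d}([p],r)$, so $\bar d$ induces the quotient topology), that $\bar d$ is complete (a $\bar d$-Cauchy sequence admits a subsequence that lifts to a $d_1$-Cauchy sequence, obtained by picking at each step a representative attaining the defining minimum, and one invokes completeness of $d_1$), and that $\bar d$ is separable (the $q$-image of a countable $d_1$-dense set is $\bar d$-dense); hence $\Omega\times\Omega/_\sim$ is Polish and, with its Borel $\sigma$-algebra, a standard Borel space. Finally, pulling back the lexicographic order of $\mathbb{R}^{\mathbb{N}}$ along the embedding of Theorem~\ref{Kechrisdense} yields a Borel linear order $\preceq$ on $\Omega$, so $T:=\{(x,y)\in\Omega\times\Omega:x\preceq y\}$ is a Borel transversal of $\sim$; thus $q|_T$ is a Borel bijection of the standard Borel space $(T,\sigma(\mathcal{B}\times\mathcal{B})|_T)$ onto $\Omega\times\Omega/_\sim$, and Lusin--Souslin gives that $q$ maps symmetric Borel subsets of $\Omega\times\Omega$ to Borel subsets of the quotient, while continuity of $q$ gives that $q^{-1}$ maps Borel subsets of the quotient to symmetric Borel subsets of $\Omega\times\Omega$.

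For (P3), let $\widetilde E\subseteq\Omega\times\Omega$ be the symmetric Borel set representing $E$. Then $q(\widetilde E)=q|_T(\widetilde E\cap T)$ is Borel in the standard Borel space $\Omega\times\Omega/_\sim$, so $E$ — with the Borel $\sigma$-algebra of the topology induced by the quotient, which (the quotient being metrizable) is the trace on $E$ of the quotient's Borel $\sigma$-algebra — is standard Borel by Lemma~\ref{polishspaceseperable}\ref{substandard}; and this $\sigma$-algebra is exactly $\mathcal{B}_E$, since for symmetric $X\in\sigma(\mathcal{B}\times\mathcal{B})$ the set $X/_\sim=q(X)$ is Borel in the quotient and contained in $E$, while conversely every Borel $B\subseteq E$ equals $q(X)$ with $X=q^{-1}(B)\subseteq\widetilde E$ symmetric and Borel in $\Omega\times\Omega$. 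That $\mathcal{B}_E$ is generated by a countable family is then immediate from the first part of Lemma~\ref{polishspaceseperable}; concretely, if $\{U_n\}$ is a countable base for $\tau$, the traces on $E$ of the symmetrizations $(U_i\times U_j)\cup(U_j\times U_i)$ generate $\mathcal{B}_E$. Granting (P3), part (P2) is bookkeeping: $(X,\mathcal{B}|_X)$ is standard Borel by Lemma~\ref{polishspaceseperable}\ref{substandard}, and since $X\times X\in\sigma(\mathcal{B}\times\mathcal{B})$ with $\sigma(\mathcal{B}\times\mathcal{B})|_{X\times X}=\sigma(\mathcal{B}|_X\times\mathcal{B}|_X)$, the set $E\cap(X\times X)$ is a symmetric Borel subset of $X\times X$ off the diagonal, so $(X,\mathcal{B}|_X,E\cap(X\times X)/_\sim)$ is a Borel graph; and a symmetric $F\subseteq E$ that is Borel in $(E,\mathcal{B}_E)$ is, by the description of $\mathcal{B}_E$ just obtained, a symmetric Borel subset of $\Omega\times\Omega$ off the diagonal, so $(\Omega,\mathcal{B},F)$ is a Borel graph.

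The main obstacle is the quotient clause of (P1): verifying that $\bar d$ is complete and that it induces the quotient topology (equivalently, that $q$ is open). The remaining ingredients — second countability of products, restriction of $\sigma$-algebras, the Borel linear order furnished by Theorem~\ref{Kechrisdense}, and the Lusin--Souslin theorem — are routine, and (P2) together with the countable-generation clause of (P3) are essentially immediate once (P1) and (P3) are in hand.
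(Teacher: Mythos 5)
Your argument is correct. For \ref{inducedpolishspcae1} it is essentially the paper's own proof: the same quotient metric $\min\{d_1(p,q),d_1(p,sq)\}$, the same ball computation showing the metric induces the quotient topology, and the same completeness argument by lifting a Cauchy sequence through a choice of representatives attaining the minimum at each step. Where you genuinely diverge is in \ref{inducedpolishspcae4} and \ref{inducedpolishspcae5}: the paper does not prove these, but cites Lemma~18.19 of \cite{Large_networks_and_graph_limits} for the induced-graph statement and then declares \ref{inducedpolishspcae5} to follow from that together with \Cref{polishspaceseperable}, whereas you give a self-contained argument, producing a Borel transversal $T=\{x\preceq y\}$ of $\sim$ from a Borel linear order pulled back along the embedding of \Cref{Kechrisdense}, and invoking Lusin--Souslin to show that $q$ carries symmetric Borel sets to Borel sets of the quotient; this identifies $\mathcal{B}_E$ with the trace of the quotient Borel $\sigma$-algebra on the Borel set $E/_\sim$, after which standardness and countable generation follow from \Cref{polishspaceseperable}. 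The trade-off is clear: the paper's route is shorter but leans on an external reference and leaves the identification of $\mathcal{B}_E$ implicit, while yours costs one classical descriptive-set-theoretic theorem (Lusin--Souslin) but makes explicit exactly why $\mathcal{B}_E$ is the Borel structure of a standard Borel space and why it is countably generated (traces of the symmetrized basic rectangles), which is the form in which \ref{inducedpolishspcae5} is actually used later in the paper.
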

\begin{proof}
 For \ref{inducedpolishspcae1}, the first part follows from the definition of standard Borel space. For $\Omega\times\Omega/_\sim$, we can assume the topology of $\Omega$ is induced by the metric $d_0$. Then we can define a function $d_1$ on $\Omega\times \Omega/_\sim$, such that $d_1([(x_1,x_2)],[(y_1,y_2)])=\min\{d_0(x_1,y_1)+d_0(x_2,y_2),d_0(x_1,y_2)+d_0(x_2,y_1)\}$.

We first check that $d_1$ is a metric. For $[(x_1,x_2)],[(y_1,y_2)],[(z_1,z_2)]\in \Omega\times \Omega/_\sim$, we can assume $$d_1([(x_1,x_2)],[(y_1,y_2)])=d_0(x_1,y_1)+d_0(x_2,y_2) \text{ and } d_1([(y_1,y_2)],[(z_1,z_2)])=d_0(y_1,z_1)+d_0(y_2,z_2).$$ Then $$d_1([(x_1,x_2)],[(y_1,y_2)])+d_1([(y_1,y_2)],[(z_1,z_2)])\geq d_0(x_1,z_1)+d_0(x_2,z_2)\geq d_1([(x_1,x_2)],[(z_1,z_2)]).$$ Moreover, if $d_1([(x_1,x_2)],[(y_1,y_2)])=0$, then either $(x_1,x_2)=(y_1,y_2)$ or $(x_1,x_2)=(y_2,y_1)$ and so $[(x_1,x_2)]=[(y_1,y_2)]$. Thus $d_1$ is a metric.

Next, it is easy to see that for $x=[(x_1,x_2)]$, $B_{r}(x)=\{y\in \Omega\times \Omega/_\sim\mid d_1(x,y)<r\}$ is the quotient of $B_{r}(\{(x_1,x_2),(x_2,x_1)\})$ in $\Omega\times \Omega$. Thus $B_{r}(x)$ is open in the induced topology. Moreover, for any open set $U_1$ in $\Omega\times \Omega/_\sim$, there exists an open set $U_0$ in $\Omega\times \Omega$, such that $U_1$ is the quotient of $U_0$. 
If we define a metric $d_2$ on $\Omega\times \Omega$ such that $d_2((x_1,x_2),(y_1,y_2))=d_0(x_1,y_1)+d_0(x_2,y_2)$, then the topology of $\Omega\times \Omega$ is induced by $d_2$, and there exists some $x_0\in U_0$ and $r_0>0$ such that $B_{r_0}(x_0)\subseteq U_0$. Let $x$ be the quotient of $x_0$ in $\Omega\times\Omega/_\sim$. Then it follows that $B_{r_0}(x)\subseteq U_1$. Thus the topology of $\Omega\times\Omega/_\sim$ induced by quotient is induced by metric $d_1$.

For the completeness, assume $\{[(x_i,y_i)]\}_{i\geq 1}$ is a Cauchy sequence under $d_1$. Then by taking a subsequence we can assume $d_1([x_i,y_i],[(x_j,y_j)])< 2^{-i-2}$ for any $1\leq i<j$. Then we construct a new sequence $\{(u_i,v_i)\}_{i\geq 1}$ in $\Omega\times\Omega$ such that $(u_1,v_1)=(x_1,y_1)$
and 
\begin{align*}
		(u_i,v_i)=\left\{ 
		\begin{aligned}
			(x_i,y_i),&\quad\quad \text{if }d_0((u_{i-1},v_{i-1}),(x_i,y_i))<2^{-i-1},\\
			(y_i,x_i),&\quad\quad \text{otherwise}.
		\end{aligned}  \right.
\end{align*}
Then $d_0((u_i,v_i),(u_{i+1},v_{i+1}))<2^{-i}$ and thus $\{(u_i,v_i)\}_{i\geq 1}$ is a Cauchy sequence in $\Omega\times\Omega$, and the limit is denoted by $w$. Let $z$ be the quotient of $w$ in $\Omega\times\Omega/_\sim$, then $w$ is the limit of $\{[(x_i,y_i)]\}_{i\geq 1}$. Thus $\Omega\times\Omega/_\sim$ is complete.

The separability of $\Omega\times\Omega/_\sim$ follows easily from the separability of $\Omega\times\Omega$. Thus $\Omega\times\Omega/_\sim$ with the quotient topology is also a standard Borel space. Hence~\ref{inducedpolishspcae1} holds. 

For~\ref{inducedpolishspcae4}, see Lemma 18.19 in \cite{Large_networks_and_graph_limits}, and \ref{inducedpolishspcae5} directly follows from~\Cref{polishspaceseperable} and~\ref{inducedpolishspcae4}.
\end{proof}

Throughout this paper, when we talk about $\mathcal{B}_E$, we always view $E$ as the quotient edge set. For a vertex $x\in \Omega$, the \textbf{degree of $x$} is the number of vertices $y\in \Omega$ such that $[(x,y)]\in E$, denoted by $d_\mathbf{G}(x)$.
Moreover, for any measurable set $A\subseteq X$, the \textbf{degree of $x$ in $A$} is the number of vertices $y\in A$ such that $[(x,y)]\in E$, denoted by $d_{\mathbf{G},A}(x)$.

In this paper, we only consider the Borel graphs such that \textbf{the degree of vertices in $\Omega$ is bounded by $D$, a fixed finite number}.

A measure $\mu$ on a standard Borel space $(\Omega,\mathcal{B})$ is called \textbf{involution invariant} if for all $A,B\in \mathcal{B}$:
\begin{align}\label{measurepreserving}
    \int_{A}\, d_{\mathbf{G},B}(x)\, d\mu(x)=\int_{B}\, d_{\mathbf{G},A}(x)\, d\mu(x).
\end{align}

Next, we give the formal definition of graphing.
\begin{defi}
    A \textbf{graphing} is a quadruple $\mathbf{G}=(\Omega,\mathcal{B},\mu,E)$, where $(\Omega,\mathcal{B},E)$
is a Borel graph with maximal degree bounded by a constant $D\geq 1$ and $\mu$ is an involution invariant probability measure on $(\Omega,\mathcal{B})$. A measure $\eta$ on $\Omega\times \Omega$ is induced by $\mu$ such that for any Borel subsets $A,B\subseteq \Omega$, $\eta(A\times B)=\dfrac{1}{2}\int_{A}\,d_B(x)\, d\mu(x)$. By Carath{\'e}odory's Theorem, there exists a unique measure on $\Omega\times \Omega$ induced by $\eta$, and this measure \textbf{induces a measure on $(E,\mathcal{B}_E)$}, denoted by $\tilde{\mu}$.
\end{defi}

For any measure on $X$, the restriction of $\mu$ on $Y$ is defined as $\mu|_Y$, such that for any $Y_1\in \mathcal{B}|_{Y}$, we have $\mu|_Y(Y_1)=\mu(Y)$.

\begin{defi}\label{inducegraphing}
    We can also define the induced graphing on vertex subsets and edge subsets. Given a graphing $(\Omega,\mathcal{B},\mu,E)$,
    \begin{enumerate}
        \item for any $X\in \mathcal{B}$ such that $\mu(X)>0$, $(X,\mathcal{B}|_X,\dfrac{\mu|_{X}}{\mu(X)},E\cap (X\times X))$ is a Borel graph, called \textbf{the induced graphing of $(\Omega,\mathcal{B},\mu,E)$ on $X$}.
        \item for any Borel set $F\in \mathcal{B}_E$, $(\Omega,\mathcal{B},\mu,F)$ is also a Borel graph, called \textbf{the induced Borel graph of $(\Omega,\mathcal{B},\mu,E)$ on $F$}.
    \end{enumerate}
\end{defi}

The \textbf{cycle matroid} of a graph $G=(V,E)$ is a pair $(E,\mathcal{I})$ consisting of the ground set $E$ and the set of independent sets $\mathcal{I}=\{F\in \mathcal{B}_E\mid \text{the induced graph } G[F]\text{ is acyclic}\}$. The rank of a subset of the ground set is the size of maximal independent set it contains.

For any edge set $F\subseteq E$, let $c(F)$ be the number of components in $(V,F)$. Then for the rank function $r_G$ of the cycle matroid of $G=(V,E)$, it follows that $$\dfrac{r_G}{|V(G)|}(F)=\dfrac{1}{|V(G)|}(|V(G)|-c(F))=\sum_{v\in V}\dfrac{1}{|V(G)|}(1-\frac{1}{|G[F]_{x}|}),\forall \,F\subseteq E,$$ where $G[F]$ is the graph $(V,F)$, and $|G[F]_x|$ is the size of the vertex set of the connected component of $G[F]$ that contains $x$.

For a graphing $\mathbf{G}=(\Omega, \mathcal{B},\mu,E)$, let $\mathcal{B}_E$ be the $\sigma$-algebra defined in~\Cref{inducedpolishspcae}. Two vertices $x,y\in\Omega$ are \textbf{connected} if there exists a finite path between $x,y$ in~$\mathbf{G}$. The \textbf{distance between two connected vertices} is the minimum length of paths connecting these two vertices. The rank function of the cycle matroid of the graphing $\mathbf{G}$, denoted by $\rho_\mathbf{G}(\cdot)$, is defined as:
$$\rho_{\mathbf{G}}(F)=\int_\Omega\, 1-\dfrac{1}{|\mathbf{G}[F]_x|}\, d\mu(x), \forall \,F\in \mathcal{B}_E. $$

For any finite connected graph $G=(V,E)$, we can regard it as a graphing $\mathbf{G}=(V,2^{[V]},\mu,E)$ with $\mu(S)=\frac{|S|}{|V|}$ for all $S\subseteq V(G)$. It is easy to show that $\mu$ is involution invariant.
Moreover, it follows that $|V(G)|\rho_{\mathbf{G}}$ is exactly the rank function of the cycle matroid of $G$ defined for finite graphs.

Given a graphing $\mathbf{G}=(\Omega,\mathcal{B},\mu,E)$, a \textbf{Borel $k$-coloring} of $\mathbf{G}$ is a Borel map $c_1\colon \Omega\longrightarrow [k]$.
A \textbf{Borel $k$-edge-coloring} of $\mathbf{G} $ is a Borel map $c_2: E(\mathbf{G}) \longrightarrow [k]$. We say $c_1$ is \textbf{proper} if for any edge $ij\in E(\mathbf{G})$, $c(i)\neq c(j)$; we say $c_2$ is \textbf{proper} if for any two incident edges $e_1$ and $e_2$, $c_2(e_1)\neq c_2(e_2)$.

\begin{prop}\label{Kechris}\rm{([\cite{KECHRIS19991}, Proposition~4.6])}
Let $\mathbf{G}$ be a graphing with vertex degree at most $D$. Then $\mathbf{G}$ has a proper Borel $(D+1)$-coloring and a proper Borel $(2D-1)$-edge-coloring.
\end{prop}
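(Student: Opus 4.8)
The plan is to reduce both statements to a single combinatorial fact — a greedy/augmenting argument performed simultaneously on a Borel independent set decomposition — and to supply the Borel bookkeeping that replaces the finite induction used for ordinary graphs. For the vertex coloring: by~\ref{inducedpolishspcae5} the Borel structure is countably generated, so fix a countable generating sequence $\{J_1,J_2,\ldots\}$ of $\mathcal{B}$. The key sublemma is that a bounded-degree Borel graph admits a Borel proper edge coloring with finitely many colors, equivalently a decomposition of $E$ into finitely many Borel matchings; this is the Borel analogue of Vizing/König and is where the real content sits. Granting it, I build the vertex coloring as follows. First produce a Borel \emph{maximal independent set} $I_0$ by a transfinite-free greedy procedure over the countable generating algebra: process the atoms of the finite algebras $\sigma(J_1,\ldots,J_n)$ in order, at each stage adding a Borel piece of a new atom that is not adjacent to what has been chosen so far (adjacency of a point to a fixed Borel set is a Borel condition because degrees are bounded by $D$ and $E$ is Borel). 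Then $I_0$ is Borel, and every vertex outside $I_0$ has a neighbor in $I_0$; on $\Omega\setminus I_0$ the induced Borel graph has the same degree bound, so iterate. Because the graph has max degree $D$, this termination analysis is exactly the finite one: after assigning color $1$ to a maximal independent set, every uncolored vertex loses at least one ``available slot,'' so after $D+1$ rounds the uncolored set is empty. The only delicate point is that ``maximal independent set'' must be genuinely maximal, not merely maximal within each finite subalgebra; I would handle this by a standard Borel exhaustion argument (take a Borel set that is independent and such that no point of its complement can be added; existence follows from the countable generation plus the bounded-degree condition, which makes the ``can be added'' predicate Borel).

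For the edge coloring, the cleanest route is to apply the vertex-coloring result to the \emph{line graph} of $\mathbf{G}$. Concretely, $(E,\mathcal{B}_E)$ is a standard Borel space by~\ref{inducedpolishspcae5}, and the relation ``$e_1$ and $e_2$ share an endpoint'' defines a symmetric Borel subset $L\subseteq E\times E$ after we view $E$ as the quotient $\binom{\Omega}{2}$. The resulting Borel graph $(E,\mathcal{B}_E,L)$ has maximum degree at most $2D-2$, since each edge $e=[(x,y)]$ meets at most $(d_{\mathbf{G}}(x)-1)+(d_{\mathbf{G}}(y)-1)\le 2D-2$ other edges. Feeding this into the vertex-coloring statement with parameter $2D-2$ gives a proper Borel coloring of $E$ with $(2D-2)+1 = 2D-1$ colors, which is exactly a proper Borel $(2D-1)$-edge-coloring of $\mathbf{G}$. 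So once the $(\Delta+1)$-vertex-coloring of bounded-degree Borel graphs is established in full generality, the edge-coloring statement is immediate; the substantive work is entirely in the vertex case.

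I expect the main obstacle to be making the greedy construction of a maximal Borel independent set rigorous, since naively iterating over a countable generating family can produce an independent set that is \emph{not} maximal (one can lose maximality in the limit). The standard fix — and the one I would adopt, citing the descriptive-combinatorics literature already in the bibliography such as~\cite{KECHRIS19991} — is the ``one-ended'' Borel greedy argument: enumerate the generating sets $J_1,J_2,\ldots$, and define $I = \bigcup_n I_n$ where $I_{n+1}$ adds to $I_n$ the Borel set of all points $x\in J_{n+1}\setminus I_n$ that have no $\mathbf{G}$-neighbor already in $I_n$ \emph{and} whose neighborhood structure relative to $J_1,\ldots,J_{n+1}$ forbids any later addition; because each vertex has at most $D$ neighbors, only finitely many ``decisions'' affect any given vertex, so the limit set $I$ is both Borel and maximal. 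After that, the reduction of the general $(D+1)$-coloring to $D+1$ applications of ``remove a maximal independent set'' is the familiar finite induction on the degree bound, carried out verbatim at the Borel level since each step stays within the standard Borel category by~\ref{substandard} and each removed set is Borel. Alternatively, if one prefers a black-box route, the entire proposition follows from the cited result~\cite{KECHRIS19991} directly, and the paragraphs above merely indicate why the bounds $D+1$ and $2D-1$ are the right ones; I would present the line-graph reduction in full and defer the vertex-coloring statement to the citation, since it is a standard fact in Borel combinatorics.
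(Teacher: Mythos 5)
The paper does not prove this proposition at all: it is quoted verbatim from Kechris--Solecki--Todorcevic \cite{KECHRIS19991} and used as a black box, so your fallback option (cite the result, present only the line-graph reduction) is exactly in the spirit of the paper. Your line-graph reduction itself is correct and is the standard argument: the incidence relation on $E$ viewed inside $\binom{\Omega}{2}$ is Borel, the line graph has maximum degree at most $2D-2$, and a $(D'+1)$-vertex-coloring with $D'=2D-2$ gives the $(2D-1)$-edge-coloring.

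However, the direct construction you sketch for the $(D+1)$-vertex-coloring has a genuine gap at its core step. Processing the atoms of the finite algebras $\sigma(J_1,\ldots,J_n)$ and adding ``the Borel piece of a new atom not adjacent to what has been chosen so far'' does not produce an independent set: the sets $J_n$ generate the Borel $\sigma$-algebra but have no relation to the graph, so two points inside the same atom can be adjacent to each other, and adding that piece wholesale destroys independence. The maximality-in-the-limit issue you flag is real too, but it is secondary; the construction fails already at independence, and the ``standard Borel exhaustion argument'' you invoke for maximality is not supplied. The correct route (and essentially the KST proof) is to first obtain a \emph{countable Borel proper coloring}: since degrees are finite, for each $x$ some basic set contains $x$ and none of its neighbors, and ``some neighbor of $x$ lies in $U_n$'' is Borel by Luzin--Novikov (this is also the fact hiding behind your assertion that adjacency to a fixed Borel set is a Borel condition). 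The countable color classes \emph{are} independent, so one can then either run your greedy over these classes to get a Borel maximal independent set and iterate $D+1$ times as you describe, or recolor greedily class by class with the least available color in $\{1,\ldots,D+1\}$ (the recursion has finite depth at each vertex because the auxiliary color strictly decreases along dependency chains). With that substitution your argument goes through; as written, the independent-set step would fail.
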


A measurable space is a pair $(\Omega,\mathcal{B})$ consisting of a set $\Omega$ and a $\sigma$-algebra $\mathcal{B}$ of subsets of $\Omega$.

\begin{defi}
    We give the definition of quotient convergence in~\cite{berczi2024quotient}. Given a measurable space $(\Omega,\mathcal{B})$, let $\varphi$ be a set function on $(\Omega,\mathcal{B})$.
    \begin{enumerate}
        \item A function $P\colon \Omega\longrightarrow \mathbb{N}$ is called \textbf{measurable} if $P^{-1}(k)\in\mathcal{B}$ for all $k\in\mathbb{N}$.
        \item For all $k\geq 1$, the \textbf{$k$-quotient set} $\mathcal{Q}_k(\varphi)$ is defined as
        $$\mathcal{Q}_k(\varphi)=\{\varphi\circ P^{-1}\mid P\colon \Omega\longrightarrow [k]\text{ is a Borel function}\}.$$
        \item For any $k\geq 1$ and $A,B\subseteq \mathbb{R}^{2^k}$, let $d(x,y)$ be the Euclidean distance between two points $x,y$. Then the Hausdorff distance between $A$ and $B$ is defined as
        $$d_{Haus}(A,B)\triangleq\max\{\sup_{x\in A} \inf_{y\in B} d(x,y), \sup_{x\in B} \inf_{y\in A} d(x,y) \}.$$
    \end{enumerate}
\end{defi}

Elements in $\mathcal{Q}_k(\varphi)$ can be embedded into $\mathbb{R}^{2^k}$ through the map $i(P)=(\varphi\circ P^{-1}(A))_{A\subseteq [k]}$. It follows that $d_{Haus}$ is a pseudometric on the space consisting of all $k$-quotient set of set functions on measurable spaces.

For two set functions $\varphi_1,\varphi_2$ on two 
Borel spaces $(\Omega_1,\mathcal{B}_1)$ and $(\Omega_2,\mathcal{B}_2)$ respectively, their distance is defined as:
$$d_Q(\varphi_1,\varphi_2)=\sum_{k=1}^\infty 2^{-k}d_{Haus}(\mathcal{Q}_k(\varphi_1),\mathcal{Q}_k(\varphi_2)).$$

This is still a pseudometric and gives a topology on the set of all set functions on measurable spaces. Note that this definition gives a way to compare two set functions on different measurable spaces.

\begin{defi}
    A sequence of set functions $\{\varphi_{n}\}_{n= 1}^\infty$ on some measurable spaces is said to \textbf{quotient converges} to some set function $\varphi$ on some measurable space if $\lim_{n\rightarrow\infty}d_Q(\varphi_n,\varphi)=0.$
    Equivalently, $\{\varphi_{n}\}_{n= 1}^\infty$ quotient converges to $\varphi$ if it converges to $\varphi$
    in the topology induced by the pseudometric $d_Q$.
\end{defi}

We now give some notion about the local profiles.

Given a graphing $\mathbf{G}=(\Omega, \mathcal{B},\mu,E)$,
the \emph{$r$-ball of a vertex $x\in\Omega$}, denoted by $B_{\mathbf{G},r}(x)$, is the induced subgraph on vertices with distance at most $r$ to $x$. We call a graphing (or graph) \emph{finite} or \emph{countable} if its vertex set is finite or countable respectively.
A \emph{directed graphing} is a graphing with an orientation on its edges.
A \emph{rooted graph} $(G,v)$ is a pair consisting of a graph $G$ and a vertex~$v\in V(G)$. 

We emphasize that when considering rank functions for directed graphs or graphings, we ignore the orientations on the edges.

For any two countable rooted (edge-colored directed) connected graphs $(G_1,v_1),(G_2,v_2)$, we say $(G_1,v_1)$ is isomorphic to $(G_2,v_2)$, if there exists an isomorphism $\varphi: V(G_1)\longrightarrow V(G_2)$, such that
\begin{enumerate}
    \item $\varphi(v_1)=v_2$.
    \item Any edge $xy\in E(G_1)$ if and only if $\varphi(x)\varphi(y)\in E(G_2)$. Thus $\varphi$ can also be viewed as a bijection from $E(G_1)$ to $E(G_2)$, such that $\varphi(xy)=\varphi(x)\varphi(y)$.
    \item If the graphs are edge-colored, then the color of $e$ is the same as the color of $\varphi(e)$, for all $e\in E(G_1)$.
    \item If the graphs are directed, then any directed edge $x\rightarrow y\in E(G_1)$ if and only if $\varphi(x)\rightarrow \varphi(y)\in E(G_2)$.
\end{enumerate}

For any edge-colored directed graphing $\mathbf{G}$ and any $k$-edge-coloring $\alpha:E(\mathbf{G})\rightarrow [k]$, the \textbf{quotient rank function} of $(\mathbf{G},\alpha)$ is defined as $\rho_{\mathbf{G}}\circ \alpha^{-1}:2^{[k]}\rightarrow \mathbb{R}$, denoted by $\rho_{\mathbf{G},\alpha}$, which can be considered as a point in $\mathbb{R}^{2^k}$. Then we can define the distance between two $k$-edge-colored directed graphings.

\begin{defi}\label{metric}
For any two $k$-edge-colored directed graphings $(\mathbf{G}_1,\alpha_1)$ and $ (\mathbf{G}_2,\alpha_2)$, the \textbf{$k$-quotient distance} between them is defined as $d_k((\mathbf{G}_1,\alpha_1), (\mathbf{G},\alpha_2))=d_{\mathbb{R}^{2^k}}(\rho_{\mathbf{G}_1,\alpha_1},\rho_{\mathbf{G}_2,\alpha_2})$, in which $d_{\mathbb{R}^{2^k}}$ is the Euclidean distance of $\mathbb{R}^{2^k}$. In particular, if $\mathbf{G}_1=\mathbf{G}_2=\mathbf{G}$, we define the \textbf{$k$-edge-coloring distance} between $\alpha_1$ and $\alpha_2$ to be $d_{k}(\alpha_1,\alpha_2)=d_{k}((\mathbf{G},\alpha_1), (\mathbf{G},\alpha_2))$. 
\end{defi}

\begin{rem}
    The distance $d_k$ may not be a metric, even if we restrict it on the set of $k$-edge-colorings of some fixed finite directed connected graph.
\end{rem}

To bridge the Borel graph $\mathbf{H}^K$ we construct in~Section~\ref{Proofofmain} with finite graphs, we need the following two lemmas.

\begin{lem}\label{densecoloring}
    For any $k,n\geq 1$, there exists $M(k,n)\in \mathbb{N}^{*}$, such that for any finite directed graph $G$, there exists a finite sequence of $k$-edge-colorings $A_{G,k,n}= (\alpha_1,\alpha_2,\ldots,\alpha_{M(k,n)})$, such that for any $k$-edge-coloring $\beta$ of $G$, $\underset{1\leq i\leq M(k,r)}{\min}d_k(\beta,\alpha_i)\leq 2^{-n}$. That is, $A_{G,k,n}$ is a finite $2^{-n}$-net of the set of $k$-edge-colorings of $G$. 
\end{lem}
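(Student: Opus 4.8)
The plan is to prove Lemma~\ref{densecoloring} by a direct counting and net-extraction argument, exploiting the fact that for a fixed finite directed graph $G$ the quotient rank functions $\rho_{G,\beta}$ of all $k$-edge-colorings $\beta$ form a bounded subset of the finite-dimensional Euclidean space $\mathbb{R}^{2^k}$. First I would fix $k,n\geq 1$ and observe that, since $G$ has $|E(G)|$ edges and $\rho_G(F)\in[0,|V(G)|]$ for every edge subset $F$, after dividing by the number of vertices (recall that we regard $G$ as a graphing with $\mu(S)=|S|/|V(G)|$) every quotient rank function $\rho_{G,\beta}$ lies in the cube $[0,1]^{2^k}\subseteq\mathbb{R}^{2^k}$, a set of Euclidean diameter $2^{k/2}$. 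Cover this cube by finitely many balls of radius $2^{-n-1}$; the number $N(k,n)$ of such balls depends only on $k$ and $n$, not on $G$. From each nonempty intersection of the set $\{\rho_{G,\beta}\}_{\beta}$ with one of these balls, pick a single colouring $\alpha$ realising a point in that intersection; the resulting finite list $A_{G,k,n}=(\alpha_1,\dots,\alpha_{M(k,n)})$ with $M(k,n)\le N(k,n)$ has the property that every $\beta$ is within $d_k$-distance $2^{-n}$ of some $\alpha_i$, because the two enclosing balls share a point, so $d_k(\beta,\alpha_i)=d_{\mathbb{R}^{2^k}}(\rho_{G,\beta},\rho_{G,\alpha_i})\le 2\cdot 2^{-n-1}=2^{-n}$.

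The only subtlety is that $M(k,n)$ must be a single number independent of $G$, whereas the extraction above gives at most $N(k,n)$ colourings; this is fine since $N(k,n)$ itself depends only on $k$ and $n$, so I would simply set $M(k,n)=N(k,n)$ and, if the extraction produces fewer than $N(k,n)$ distinct colourings, pad the sequence by repeating entries so that the list has exactly length $M(k,n)$ — the net property is unaffected by repetitions. One also has to be slightly careful that $d_k$ is only a pseudometric (indeed not even that, per the remark preceding the lemma), but this causes no trouble: the statement only asserts an upper bound $\min_i d_k(\beta,\alpha_i)\le 2^{-n}$, and the triangle-type estimate used is just the genuine Euclidean triangle inequality applied to the images $\rho_{G,\beta},\rho_{G,\alpha_i}$ in $\mathbb{R}^{2^k}$, which is valid regardless of whether $d_k$ separates points.

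The main obstacle, such as it is, is bookkeeping rather than mathematics: one must make sure the bound $N(k,n)$ on the covering number of $[0,1]^{2^k}$ by balls of radius $2^{-n-1}$ is written down explicitly and cleanly (for instance $N(k,n)=\lceil 2^{n+1}\sqrt{2^k}\,\rceil^{2^k}$ works, using an axis-aligned grid of side $2^{-n-1}/\sqrt{2^k}$ refined into cells each contained in a ball of the required radius), and that the dependence is genuinely only on $k$ and $n$. I would state this covering bound as a one-line elementary fact, define $M(k,n)$ to be that number, and then present the extract-and-pad construction in two or three sentences, concluding with the triangle inequality estimate displayed above. No deeper input — no properties of graphings, rank functions, or Borel structure beyond boundedness of $\rho_G$ — is needed.
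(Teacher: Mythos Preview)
Your proposal is correct and follows essentially the same approach as the paper: both arguments observe that all quotient rank functions $\rho_{G,\beta}$ lie in the cube $[0,1]^{2^k}$, take a $2^{-(n+1)}$-net (or equivalently a cover by balls of radius $2^{-n-1}$) of that cube whose size $M(k,n)$ depends only on $k$ and $n$, select one colouring near each net point when possible, and conclude by the Euclidean triangle inequality. The only cosmetic difference is that the paper fills empty slots with an arbitrary colouring while you pad by repetition, which is immaterial.
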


\begin{proof}
    For any finite directed graph $G$ and $k$-edge-coloring $\alpha$, we have $0\leq \rho_{G,\alpha}\leq 1$. Thus $\rho_{G,\alpha}$ is in the compact set $E=\{(x_1,x_2,\cdots,x_{2^k}):0\leq x_i\leq 1, 1\leq i\leq 2^k\}\subseteq \mathbb{R}^{2^k}$. Let $\{\textbf{x}_1,\textbf{x}_2,\ldots,\textbf{x}_M\}$ be a $2^{-(n+1)}$-net of $E$. For any $1\leq i\leq M$, we select a $k$-edge-coloring $\alpha_i$ of $G$ such that $d_{\mathbb{R}^{2^k}}(\rho_{G,\alpha_i},\textbf{x}_i)\leq 2^{-(n+1)}$, if such a $k$-edge-coloring exists; otherwise, let $\alpha_i$ be an arbitrary $k$-edge-coloring of $G$. We claim that $(\alpha_1,\alpha_2,\cdots,\alpha_M)$ is a $2^{-n}$-net of the set of $k$-edge-colorings of $G$, (i.e., for every $k$-edge-coloring $\beta$ of $G$, there exists a $1\leq i\leq M$ such that $d_{\mathbb{R}^{2^k}}(\rho_{G,\beta}, \textbf{x}_i)\leq 2^{-(n+1)}$). By the definition of $\alpha_i$, we have \[d_k(\beta,\alpha_i)=d_{\mathbb{R}^{2^k}}(\rho_{G,\beta},\rho_{G,\alpha_i})\leq d_{\mathbb{R}^{2^k}}(\rho_{G,\beta},\textbf{x}_i)+d_{\mathbb{R}^{2^k}}(\rho_{G,\alpha_i},\textbf{x}_i)\leq 2^{-(n+1)}+2^{-(n+1)}=2^{-n}.\]
\end{proof}

\begin{rem}
Note that in~\Cref{densecoloring} we do not need to assume that the maximal degree of $G$ is bounded by a constant $D$. This is different from Lemma~19.18 proved by Lov{\'a}sz in \cite{Large_networks_and_graph_limits}.
\end{rem}

\begin{lem}\label{continuouscoloring}\rm{([\cite{Large_networks_and_graph_limits}, Lemma~19.17])}
    Let $K$ be a compact and totally disconnected (i.e. any two points are not connected) metric space with a probability measure $\pi$ on $K$. For any   Borel functions $\alpha: K\rightarrow [k]$ and any $\varepsilon >0$, there exists a continuous function $\delta: K\rightarrow [k]$ such that $\pi(\{\alpha\neq \delta\})\leq \varepsilon$.
\end{lem}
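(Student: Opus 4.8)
The plan is to reduce the statement to two standard topological facts about $K$ and then run a short regularization argument. The facts are: (i) a compact metrizable totally disconnected space is \emph{zero-dimensional}, i.e. the clopen subsets form a basis of the topology; and (ii) in such a space any two disjoint compact sets can be surrounded by disjoint clopen neighbourhoods. Both follow from the classical fact that in a compact Hausdorff space the connected component of a point coincides with its quasi-component (the intersection of all clopen sets containing it): total disconnectedness makes each quasi-component a singleton, and then a compactness argument (covering a closed set by clopen neighbourhoods of a fixed point that avoid it) produces a clopen basis and the clopen separation of disjoint compacta.

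Granting this, first I would use that $\pi$, being a Borel probability measure on a Polish (indeed compact metric) space, is inner regular with respect to compact sets. Writing $A_i=\alpha^{-1}(i)$ for $i\in[k]$, the sets $A_1,\dots,A_k$ are pairwise disjoint Borel sets covering $K$. Given $\varepsilon>0$, choose for each $i$ a compact set $C_i\subseteq A_i$ with $\pi(A_i\setminus C_i)<\varepsilon/k$; the $C_i$ are then pairwise disjoint and compact.

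Next I would separate the $C_i$ by clopen sets. Applying fact (ii) repeatedly (at step $i$ separating $C_i$ from $C_{i+1}\cup\cdots\cup C_k$ inside the clopen complement of the pieces already chosen), I obtain pairwise disjoint clopen sets $V_1,\dots,V_k$ with $C_i\subseteq V_i$. Absorbing the clopen remainder $K\setminus\bigcup_i V_i$ into $V_1$ gives a clopen partition $U_1,\dots,U_k$ of $K$ with $C_i\subseteq U_i$ for every $i$. Define $\delta\colon K\to[k]$ by $\delta\equiv i$ on $U_i$; for any $S\subseteq[k]$ the preimage $\delta^{-1}(S)$ is a union of some of the $U_i$, hence clopen, so $\delta$ is continuous. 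Finally, if $x\in C_i$ then $\delta(x)=i=\alpha(x)$, so $\{\alpha\neq\delta\}\subseteq K\setminus\bigcup_i C_i=\bigcup_i(A_i\setminus C_i)$, and therefore $\pi(\{\alpha\neq\delta\})\le\sum_{i=1}^k\pi(A_i\setminus C_i)<\varepsilon$.

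The only genuinely nontrivial ingredient — and hence the point to justify carefully or to cite — is fact (i)/(ii), the zero-dimensionality of a compact metric totally disconnected space and the clopen separation of disjoint compacta; the measure-theoretic part is only bookkeeping with inner regularity. Since the statement is exactly Lemma~19.17 of \cite{Large_networks_and_graph_limits}, one may alternatively just invoke it; the sketch above is included for completeness.
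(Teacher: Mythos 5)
Your argument is correct. One small point of context first: the paper itself offers no proof of this lemma --- it is quoted verbatim as Lemma~19.17 of the cited monograph --- so there is no internal argument to compare against; your write-up is a self-contained justification of the cited fact. The two ingredients you isolate are exactly the right ones: zero-dimensionality of a compact, totally disconnected metric space (components equal quasi-components in compact Hausdorff spaces, so singleton quasi-components plus compactness yield a clopen basis and clopen separation of disjoint compacta), and inner regularity of a Borel probability measure on a compact metric space. The inductive separation step is sound as stated, since at stage $i$ the clopen set $V_1\cup\dots\cup V_{i-1}$ already chosen is disjoint from $C_i\cup\dots\cup C_k$, so the remaining compacta all live in its clopen complement and the separation can be carried out there; absorbing the leftover clopen remainder into $U_1$ gives a clopen partition refining the approximation, and the final estimate $\pi(\{\alpha\neq\delta\})\le\sum_i\pi(A_i\setminus C_i)<\varepsilon$ is immediate. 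This is essentially the standard proof of the lemma (an equivalent route, closer to how such statements are often proved, is to approximate each Borel set $A_i$ in $\pi$-measure by a clopen set, using that clopen sets generate the Borel $\sigma$-algebra and that the family of measure-approximable sets is a $\sigma$-algebra, and then disjointify; your compact-set version buys the same conclusion with inner regularity doing the measure-theoretic work).
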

The following Prokhorov's theorem plays an important role in finding the limit object.
\begin{thm}\label{Prokhorov}\rm{([\cite{billingsley2013convergence}, Theorem~5.2])}    If $(X,d)$ is a compact metric space, then $\mathscr{P}(X)$ is a compact metric space with respect to $d_{P}$. That is, for any sequence of Borel probability measures $\{\mu_i\}_{i=1}^\infty$ on $X$, there exists a convergent subsequence $\{\mu_{i_k}\}_{k=1}^\infty$ with respect to $d_{P}$. In particular, for any bounded continuous function $f:X\longrightarrow \mathbb{R}$, we have
$$\lim_{k\rightarrow \infty}\int_X\,f\, d\mu_{i_k}=\int_X\,f\, d\mu.$$
\end{thm}

\section{Limit for Quotient Convergent Graph Sequences}\label{Proofofmain}
In this section, we aim to prove~\Cref{quotientconvergentlimit}.
We construct an edge colored directed Borel graph $\mathbf{H}^K$ following the method in \cite{MR3177383} that is used to prove~\Cref{lovaszcomplete}. Briefly, given a sequence of convergent graphs, the authors construct a space consisting of finite vertex-labelled connected graphs, regard these graphs as vertices of a Borel graph and define the edges between these vertices in a particular way, such that every finite graph can be embed into the Borel graph. Moreover, for any convergent graph sequence, they define a sequence of involution measures on the Borel graph to make it isomorphic to the graphs as a graphing respectively, and then use Prokhorov's Theorem (\Cref{Prokhorov}) to find a weak limit and obtain the required graphing.

In our proof, given a graph sequence whose rank functions are quotient convergent, since what we consider is the rank function of the cycle matroid, we focus on the edges of the graphs rather than the vertices. So we consider the space $\Omega$ consisting of all countable rooted directed connected graphs with edge colored by some decoration space $K$. Here, we carefully choose an injective edge coloring of every finite directed graph to distinguish edges and add an orientation on the edges to distinguish the two vertices of each edge. Thus we can embed any finite directed graph to this Borel graph $\mathbf{H}^K$ (as in~\Cref{injectembe}). For any such graphs, we can give an involution invariant measure on $\Omega$ to obtain a graphing isomorphic to this graph as a graphing. We can prove that $\Omega$ is a compact space, and thus by Prokhorov's Theorem, the space of probability measures on $\Omega$ is compact. So we can choose a weak limit measure $\mu$ of some subsequence of the corresponding measures of the graphs in the graph sequence in~\Cref{quotientconvergentlimit}.
Then we get a graphing and try to prove that the rank function of this graphing is exactly the limit of the rank functions of the graph sequence.\\

First, we introduce some notations. 
In~\Cref{densecoloring}, for any $k,n\geq 1$, we can fix a number $M(k,n)$, such that for any finite directed graph $G$, there exists a $2^{-n}$-net of size $M(k,n)$ in the set of all $k$-edge-colorings of $G$.
Let $K=\prod_{k,n=1}^\infty [k]^{M(k,n)}$ be the \textbf{decoration space}
and $P_m: K\longrightarrow \prod_{k,n=1}^m [k]^{M(k,n)}$ be the natural projection. Let $\Omega_0$ be the set of all triples $(G,v,\chi)$, where $(G,v)$ is a rooted directed countable connected graph with maximal degree bounded by~$D$, and $\chi$ is a map from the $E(G)$ to $K$, called a \textbf{decoration} of $G$.
Two points $(G_1,v_1,\chi_1),(G_2,v_2,\chi_2)$ are equivalent if there exists an isomorphism between $G_1,G_2$ that preserves the root and the decoration. That is, there exists an bijection $\phi$ from $V(G_1)$ to $V(G_2)$, such that
\begin{enumerate}[label=(\roman*)]
    \item $\phi(v_1)=v_2$.
    \item Any directed edge $x\rightarrow y\in E(G_1)$ if and only if $\phi(x)\rightarrow \phi(y)\in E(G_2)$. Thus $\phi$ can also be viewed as a bijection from $E(G_1)$ to $E(G_2)$), such that $\phi(x\rightarrow y)=\phi(x)\rightarrow\phi(y)$.
    \item $\chi_1(e)=\chi_2(\phi(e))$, for all $e\in E(G_1)$.
\end{enumerate}
This definition remains valid if $\chi$ is a map from $E(K)$ to $P_m(K)$, allowing us to define equivalence relation for triples like $(G,v,P_m(\chi))$.
Let $\Omega$ be the quotient space under this equivalence relation.

For any rooted directed graph $G$, let $B_{G,n}(v)$ be the induced graph of $G$ on the set of vertices with distance at most $n$ to $v$.
If $\chi$ is a decoration of $G$, then the restriction of this function on $E(B_{G,n}(v))$ is a decoration of $B_{G,n}(v)$, denoted by $\chi |_{B_{G,n}(v)}$.
For any two triples $(G_1,v_1,\chi_1)$ and $(G_2,v_2,\chi_2)$ in $\Omega$, the \textbf{local distance} on $\Omega$ is defined as 
\begin{align*}
     d_{\Omega}((G_1,v_1,\chi_1),(G_2,v_2,\chi_2)) =\inf_{n\geq 0}\left\{2^{-n}+2^{-m} \;\middle|\; \left. 
		\begin{matrix}
			(B_{G_1,n}(v_1),v_1,P_m\circ\chi_1|_{B_{G_1,n}(v_1)})\\
			=(B_{G_2,n}(v_2),v_2,P_m\circ\chi_2|_{B_{G_2,n}(v_2)})
		\end{matrix}  \right.    
        \right\}.
\end{align*}

It can be verified that this distance defines a metric on $\Omega$. We call the topology induced by this metric the \textbf{local topology}. Moreover, let the local topology of $\Omega\times \Omega$ be given by the product topology.

\begin{lem}\label{completivity}
Let $\Omega$ and $d_\Omega$ be defined as above. Then $(\Omega,d_{\Omega})$ is a complete metric space.
\end{lem}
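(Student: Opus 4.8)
Granting (as already noted in the text) that $d_\Omega$ is a metric, the plan is to show completeness by taking an arbitrary Cauchy sequence $\{(G_i,v_i,\chi_i)\}_{i\geq 1}$ in $(\Omega,d_\Omega)$ and constructing an explicit limit point as an increasing union of ``eventually stable'' rooted decorated balls. The crucial first observation is combinatorial: since the condition appearing in the definition of $d_\Omega$ is \emph{literal equality} of truncated rooted decorated balls, Cauchyness forces each finite ball to stabilise along the sequence. Concretely, I would fix $r\geq 0$ and $s\geq 1$, set $N=\max\{r,s\}$, and use the Cauchy property to find $I$ with $d_\Omega\big((G_i,v_i,\chi_i),(G_j,v_j,\chi_j)\big)<2^{-N}$ for all $i,j\geq I$; unwinding the infimum in the definition of $d_\Omega$ then produces radii $n\geq r$ and $m\geq s$ (because $2^{-n}+2^{-m}<2^{-N}$) with $(B_{G_i,n}(v_i),v_i,P_m\circ\chi_i|_{B_{G_i,n}(v_i)})=(B_{G_j,n}(v_j),v_j,P_m\circ\chi_j|_{B_{G_j,n}(v_j)})$. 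Restricting this isomorphism to the radius-$r$ sub-ball (distances and induced edges in the radius-$r$ ball agree whether computed in $G$ or in $B_{G,n}(v)$ for $n\geq r$) and post-composing the decoration with the canonical projection $\pi_{m,s}\colon P_m(K)\to P_s(K)$ (valid since $s\leq m$ and $P_s=\pi_{m,s}\circ P_m$) shows that the isomorphism class $Q_{r,s}\triangleq[(B_{G_i,r}(v_i),v_i,P_s\circ\chi_i|_{B_{G_i,r}(v_i)})]$ is constant for all $i\geq I$.

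Next I would assemble the classes $Q_{r,s}$ into a single limit object. They are coherent in both parameters: $Q_{r,s}$ is the radius-$r$ ball of $Q_{r',s}$ for $r\leq r'$ (restriction at a large finite stage), and $Q_{r,s}$ is obtained from $Q_{r,s'}$ by pushing the decoration through $\pi_{s',s}$ for $s\leq s'$ (because $P_s\circ\chi=\pi_{s',s}\circ P_{s'}\circ\chi$). The coherence in $s$ shows the underlying rooted graph of $Q_{r,s}$ is independent of $s$ — call it $(L_r,w_r)$, a finite connected rooted graph of radius $\leq r$ with maximum degree $\leq D$ — and that the associated decorations into the finite sets $P_s(K)$ form an inverse-compatible system, hence (as $K=\varprojlim_s P_s(K)$ is literally a product of finite sets) glue to a single decoration $\lambda_r\colon E(L_r)\to K$. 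The coherence in $r$ then lets me choose concrete representatives with $L_0\subseteq L_1\subseteq L_2\subseteq\cdots$, each $L_r$ being the induced radius-$r$ ball of $L_{r+1}$, sharing the common root $v_\infty$ and satisfying $\lambda_{r+1}|_{E(L_r)}=\lambda_r$. Setting $G_\infty=\bigcup_r L_r$, $v_\infty$ the common root, and $\chi_\infty=\bigcup_r\lambda_r$, I would verify $(G_\infty,v_\infty,\chi_\infty)\in\Omega$: connectedness and countability are immediate from the union structure, the orientation is inherited, the degree bound $D$ holds because any vertex $x$ lies in some $L_r$, hence its entire $G_\infty$-neighbourhood lies in $L_{r+1}$ and its $G_\infty$-degree equals its degree inside $L_{r+1}$ — which equals its degree in some $B_{G_k,r+1}(v_k)\subseteq G_k$, thus $\leq D$ — and a short distance argument gives $B_{G_\infty,r}(v_\infty)=(L_r,w_r,\lambda_r)$ for every $r$.

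For convergence: given $\varepsilon>0$, pick $r\geq 0$ and $s\geq 1$ with $2^{-r}+2^{-s}<\varepsilon$; by the stabilisation from the first step there is $I$ with $(B_{G_i,r}(v_i),v_i,P_s\circ\chi_i|_{B_{G_i,r}(v_i)})=Q_{r,s}=(L_r,w_r,P_s\circ\lambda_r)=(B_{G_\infty,r}(v_\infty),v_\infty,P_s\circ\chi_\infty|_{B_{G_\infty,r}(v_\infty)})$ for all $i\geq I$, whence $d_\Omega\big((G_i,v_i,\chi_i),(G_\infty,v_\infty,\chi_\infty)\big)\leq 2^{-r}+2^{-s}<\varepsilon$. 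Thus the Cauchy sequence converges in $\Omega$, so $(\Omega,d_\Omega)$ is complete.

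I expect the main obstacle to be the bookkeeping that entangles the two parameters, the ball radius $r$ and the decoration truncation level $s$: one must check that the stabilised classes $Q_{r,s}$ are mutually coherent in \emph{both} parameters and that the glued object is a legitimate point of $\Omega$ with the correct balls. The degree-bound verification for $G_\infty$ and the identification $B_{G_\infty,r}(v_\infty)=(L_r,w_r,\lambda_r)$ are precisely the points where one must be careful that distances and neighbourhoods computed inside the finite balls coincide with those in the infinite limit; the remainder is a routine unwinding of the definition of $d_\Omega$.
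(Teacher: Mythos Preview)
Your proof is correct and follows essentially the same approach as the paper's: both construct the limit as an increasing union of eventually-stabilised rooted decorated balls obtained from the Cauchy condition. The paper couples the ball radius and the decoration-truncation level into a single inductive parameter (after passing to a fast subsequence) and is somewhat informal about how the full $K$-valued decoration arises in the limit, whereas you keep the two parameters $r$ and $s$ separate and recover $\chi_\infty$ explicitly from the inverse-limit structure $K=\varprojlim_s P_s(K)$; this extra bookkeeping is exactly what makes your verification of the degree bound and of $B_{G_\infty,r}(v_\infty)=L_r$ go through cleanly.
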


\begin{proof}
     Suppose $\{(G_n,v_n,\chi_n)\}_{n=1}^\infty$ is a Cauchy sequence with respect to $d_{\Omega}$. Then for any $s\in \mathbb{N}^*$, there exist a constant $N(s)$ such that for every $n,m\geq N(s)$, $$d_{\Omega}((G_n,v_n,\chi_n),(G_m,v_m,\chi_m))< 2^{-s}.$$
     By taking a subsequence, we may assume that $N(s)=s$.
     We construct the limit point in~$\Omega$ by induction on~$n$.

     Let $T_1=(B_{G_1,1}(v_1),v_1,\chi_1|_{B_{G_1,1}(v_1)})$. If we have defined $T_i=(H_i,v_1,c_i)$ for $i\leq n$, such that $d(T_n,G_m)<2^{-n}$ for any $m\geq n$, and $B_{H_n,m}(v_1)=H_m$ and $c_n|_{E(H_m)}=c_m$ for any $1\leq m<n$.
     Then there exists an isomorphism $\phi: B_{G_{n+1},n}(v_{n+1})\rightarrow H_n$ such that $$\phi(v_{n+1})=v_1,~~~ P_n\circ\chi_{n+1}(e)=P_n\circ c_n(\phi(e))~\text{for all }e\in E(B_{G_{n+1},n}(v_{n+1})).$$ Next, we can extend $H_n$ to a graph $H_{n+1}$ and extend $c_n$ to a decoration $c_{n+1}$ of $H_{n+1}$, such that there exists an isomorphism $\phi': B_{n+1,G_{n+1}}(v_{n+1})\rightarrow H_{n+1}$ with $$\phi'(v_{n+1})=v_1,~~~  P_{n+1}\circ \chi_{n+1}(e)=P_{n+1}\circ c_{n+1}(\phi'(e)), ~\text{for all } e\in E(B_{G_{n+1},n+1}(v_{n+1})).$$  Let $T_{n+1}=(H_{n+1},v_1,c_{n+1})$. Note that the limit of $T_n$ is a triple $T=(H,v_1,\chi)$, such that $H$ is a countable directed connected graph rooted at $v_1$, $\chi$ is a decoration of $H$. Then $T$ is the limit of $\{(G_n,v_n,\chi_n)\}_{n=1}^\infty$ in $\Omega$.
\end{proof}

\begin{lem}\label{Omegacompa}
Let $\Omega$ be defined as above. Then under the local topology, $\Omega$ is compact and totally disconnected (i.e. any two points are not connected).
\end{lem}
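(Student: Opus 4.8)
The plan is to exploit the bound $D$ on the degree: for each radius $n$ and decoration precision $m$, the map sending a triple to the isomorphism type of its radius-$n$ ball decorated by $P_m\circ\chi$ takes only finitely many values and is locally constant. Compactness then follows from~\Cref{completivity} together with total boundedness, and total disconnectedness follows because these locally constant maps have clopen fibres that separate points.

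More precisely, I would first fix $n,m\ge 1$ and define $\pi_{n,m}\colon\Omega\to\mathcal D_{n,m}$ by $\pi_{n,m}(G,v,\chi)=\bigl[(B_{G,n}(v),v,P_m\circ\chi|_{B_{G,n}(v)})\bigr]$, the isomorphism class of the decorated rooted ball, where $\mathcal D_{n,m}$ denotes the set of isomorphism classes of rooted directed connected graphs of radius $\le n$ and maximal degree $\le D$ with edge-decorations valued in the finite set $P_m(K)=\prod_{k,\ell=1}^m[k]^{M(k,\ell)}$. This is well defined on the quotient $\Omega$ because the data it depends on is isomorphism-invariant, and since a radius-$n$ ball of a degree-$\le D$ graph has at most $1+D+\cdots+D^n$ vertices, the set $\mathcal D_{n,m}$ is finite. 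The point to handle carefully is reading off agreement of the decorated balls from the infimum defining $d_\Omega$: if $d_\Omega(x,y)<2^{-\max(n,m)-1}$ then there is an agreement level $(a,b)$ with $2^{-a}+2^{-b}<2^{-\max(n,m)-1}$, which forces $a>\max(n,m)\ge n$ and $b>\max(n,m)\ge m$; agreement at the larger radius and higher precision implies agreement at level $(n,m)$ (restrict the ball isomorphism to radius $n$, and post-compose the decorations with the projection $P_b\twoheadrightarrow P_m$, which is legitimate since $P_m$ factors through $P_b$ for $b\ge m$). Hence $\pi_{n,m}$ is locally constant, so continuous onto the discrete space $\mathcal D_{n,m}$, and every fibre $\pi_{n,m}^{-1}(\xi)$ is clopen with diameter $\le 2^{-n}+2^{-m}$.

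With this in hand, compactness is immediate: given $\varepsilon>0$, choose $n,m$ with $2^{-n}+2^{-m}<\varepsilon$; the finitely many fibres of $\pi_{n,m}$ cover $\Omega$ by sets of diameter $<\varepsilon$, so $\Omega$ is totally bounded, and being complete by~\Cref{completivity} it is compact. For total disconnectedness, if $x\ne y$ then $d_\Omega(x,y)=c>0$; choosing $n$ with $2^{-n+1}<c$, the triples cannot agree at level $(n,n)$, so $\pi_{n,n}(x)\ne\pi_{n,n}(y)$, and $\pi_{n,n}^{-1}(\pi_{n,n}(x))$ is a clopen set containing $x$ but not $y$; therefore the connected component of any point is a single point. (Equivalently, $x\mapsto(\pi_{n,m}(x))_{n,m\ge1}$ embeds $\Omega$ as a closed subspace of the compact totally disconnected product $\prod_{n,m}\mathcal D_{n,m}$, yielding both assertions at once.) The only real obstacle is the bookkeeping around the two-parameter infimum defining $d_\Omega$ — one must verify that a small distance genuinely forces agreement of the decorated balls at the prescribed radius and precision, using the monotonicity of "agreement at level $(a,b)$" in both coordinates and the compatibility of the projections $P_m$ and $P_b$; the rest (finiteness of $\mathcal D_{n,m}$, local constancy, and the covering argument) is routine.
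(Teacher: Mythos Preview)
Your proof is correct and takes a genuinely different route from the paper's. The paper argues sequential compactness directly: given an arbitrary sequence in $\Omega$, it applies the pigeonhole principle at each level $r$ (using that there are only finitely many isomorphism types of radius-$r$ balls with $P_r$-truncated decorations) to extract nested subsequences $\mathcal{G}_1\supseteq\mathcal{G}_2\supseteq\cdots$ in which the level-$r$ data is constant, and then diagonalizes to obtain a Cauchy subsequence; completeness (\Cref{completivity}) finishes the job. Total disconnectedness is dismissed in one line as ``following from the definition.''

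Your approach instead packages the same finiteness input into the locally constant maps $\pi_{n,m}$ with clopen fibres of controlled diameter, from which total boundedness is immediate and compactness follows from completeness. This is cleaner in two respects: it avoids the subsequence-of-subsequences bookkeeping, and it yields total disconnectedness as a genuine byproduct (a clopen basis that separates points) rather than leaving it as an exercise. Your parenthetical remark that $(\pi_{n,m})_{n,m}$ embeds $\Omega$ as a closed subspace of the profinite space $\prod_{n,m}\mathcal{D}_{n,m}$ is the most conceptual formulation and subsumes both conclusions at once. The only place requiring care---the monotonicity of ``agreement at level $(a,b)$'' in both parameters, needed to deduce that small $d_\Omega$ forces agreement at the prescribed level---you have handled correctly.
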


\begin{proof}
    The total disconnectedness follows from the definition.
    By~\Cref{completivity}, it suffices to show that $\Omega$ is sequentially compact. Let $\{(G_n,v_n,\chi_n)\}_{n=1}^\infty$ be an arbitrary sequence in $\Omega$. For any $r\geq 1$, consider the triples $\{(B_{G_n,r}(v_n),v_n,P_r\circ \chi_n|_{E(B_{G_n,r})}\}_{n=1}^\infty$. Since there exist finite non-isomorphic choices of such triples, repeatedly by pigeonhole principle, we can select subsequences $\mathcal{G}_1\supseteq\ldots \supseteq\mathcal{G}_r\supseteq \ldots $ of  $\{(G_n,v_n,\chi_n)\}_{n=1}^\infty$ such that $$\text{for all $(G_n,v_n,\chi_n)$ in }\mathcal{G}_r,~~~\{(B_{G_n,r}(v_n),v_n,P_r\circ \chi_n|_{E(B_{G_n,r})}\}_{n=1}^\infty\text{ are the same}.$$ 

    So in $\mathcal{G}_r$, any two elements have distance smaller than $2^{-r}$. We construct a new subsequence $\mathcal{G}'=\{G_n'\}$ such that $G_n'\in\mathcal{G}_n$ for $n\ge 1$. Then $\mathcal{G}'$ is a convergent sequence under the metric~$d_\Omega$. Hence, $\Omega$ is compact. 
\end{proof}
Let $$\mathcal{D}=\bigcup_{r\geq1} \{(G,v,\chi)\in \Omega\mid |V(G)|=r, v\in V(G),\chi(e)\in \prod_{k,n=1}^r [k]^{M(k,n)}\cdot\prod_{k>r\text{ or }n>r}[1]^{M(k,n)}\}.$$ Then $\mathcal{D}$ is a countable set dense in $\Omega$. Combined with~\Cref{inducedpolishspcae},~\Cref{completivity} and~\Cref{Omegacompa}, we obtain the following corollary.
\begin{cor}
Let $\Omega$ be defined as above, then $\Omega$ and $\Omega\times\Omega$ are both compact and totally disconnected standard Borel spaces when equipped with the local topology.
\end{cor}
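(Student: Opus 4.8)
The plan is to assemble the three preceding lemmas together with the density of $\mathcal{D}$. First I would observe that by~\Cref{completivity} the metric space $(\Omega,d_\Omega)$ is complete, and that $\mathcal{D}$ is a countable dense subset, so $(\Omega,d_\Omega)$ is separable and completely metrizable, i.e.\ a Polish space. Hence, taking $\tau$ to be the local topology, the pair $(\Omega,\mathcal{B}_\Omega)$ with $\mathcal{B}_\Omega$ the Borel $\sigma$-algebra of $\tau$ is a standard Borel space by definition. Compactness and total disconnectedness of $\Omega$ are then exactly the content of~\Cref{Omegacompa}, so the first half of the statement follows immediately.

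For $\Omega\times\Omega$, I would argue coordinatewise. A finite product of compact spaces is compact, so $\Omega\times\Omega$ is compact; a finite product of totally disconnected spaces is totally disconnected, since the connected component of a point $(x,y)$ in the product is contained in the product of the connected components of $x$ and $y$, each of which is a single point; and a finite product of Polish spaces is Polish (for instance, the sum metric $d_\Omega(x_1,x_2)+d_\Omega(y_1,y_2)$ is complete and $\mathcal{D}\times\mathcal{D}$ is a countable dense subset). By~\Cref{inducedpolishspcae}~\ref{inducedpolishspcae1}, $(\Omega\times\Omega,\sigma(\mathcal{B}_\Omega\times\mathcal{B}_\Omega))$ is a standard Borel space. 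It then remains to note that the Borel $\sigma$-algebra of the product topology on $\Omega\times\Omega$ coincides with $\sigma(\mathcal{B}_\Omega\times\mathcal{B}_\Omega)$; this holds because $\Omega$ is second countable (being separable and metrizable), so every open set of the product topology is a countable union of boxes $U\times V$ with $U,V$ open in $\Omega$.

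I do not expect a real obstacle here: everything reduces to standard facts about Polish and compact metric spaces together with lemmas already established, and the identification of the Borel $\sigma$-algebra of the product topology with the product $\sigma$-algebra is routine given second countability. One could equally bypass the last point altogether by working directly with the sum metric on $\Omega\times\Omega$ and the dense set $\mathcal{D}\times\mathcal{D}$, which exhibits $\Omega\times\Omega$ as a compact, totally disconnected Polish space without reference to $\sigma(\mathcal{B}_\Omega\times\mathcal{B}_\Omega)$.
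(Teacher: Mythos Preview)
Your proposal is correct and follows essentially the same approach as the paper, which simply asserts the corollary as a direct consequence of \Cref{inducedpolishspcae}, \Cref{completivity}, \Cref{Omegacompa}, and the density of $\mathcal{D}$. You have merely spelled out in detail how these ingredients combine, including the routine identification of the product Borel $\sigma$-algebra with the Borel $\sigma$-algebra of the product topology via second countability.
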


We denote the Polish space $\Omega$ with the local topology by $(\Omega,\mathcal{B})$, where $\mathcal{B}$ is the Borel $\sigma$-algebra of $\Omega$.
Now we can define an directed edge set $E$ on $\Omega$ and a directed Borel graph $\mathbf{H}^K=(\Omega,\mathcal{B},E)$.

For two vertices $(G_1,v_1,\chi_1),(G_2,v_2,\chi_2)$, a directed edge from $(G_1,v_1,\chi_1)$ to $(G_2,v_2,\chi_2)$ belongs to $E$ if and only if there exists a countable directed connected graph $H$ with a directed edge $u\rightarrow u'$ and a decoration $\chi$ of $H$, such that $(H,u,\chi)=(G_1,v_1,\chi_1)$ and $(H,u',\chi)=(G_2,v_2,\chi_2)$ in $\Omega$.

\begin{lem}\label{EHKcop}
    Let $E(\mathbf{H}^K)$ be defined as above. Then
    $E(\mathbf{H}^K)$ is closed in the local topology of $\Omega\times \Omega$ and so $E(\mathbf{H}^K)$ is compact and totally disconnected. In particular, $E(\mathbf{H}^K)$ is Borel and $(\Omega,\mathcal{B},E(\mathbf{H}^K))$ is a Borel graph.
\end{lem}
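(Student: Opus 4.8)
The plan is to show that $E(\mathbf{H}^K)$ is a closed subset of $\Omega\times\Omega$ with the product local topology; the remaining assertions then follow immediately, since $\Omega\times\Omega$ is compact and totally disconnected by the preceding corollary, a closed subset of a compact space is compact, a closed subset of a totally disconnected space is totally disconnected, and every closed set is Borel. Symmetry of $E(\mathbf{H}^K)$ as required in the definition of a Borel graph is not literally present here — the graph is directed — but one checks that $(\Omega,\mathcal{B},E(\mathbf{H}^K))$ is a (directed) Borel graph exactly because $E(\mathbf{H}^K)$ is a Borel subset of $\Omega\times\Omega$; the undirected Borel graph underlying it is obtained by symmetrizing, which preserves Borelness.

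To prove closedness, I would take a sequence of directed edges $\big((G_1^{(i)},v_1^{(i)},\chi_1^{(i)}),(G_2^{(i)},v_2^{(i)},\chi_2^{(i)})\big)\in E(\mathbf{H}^K)$ converging in $\Omega\times\Omega$ to a pair $\big((G_1,v_1,\chi_1),(G_2,v_2,\chi_2)\big)$, and show this limit pair is again in $E(\mathbf{H}^K)$. By definition of membership in $E(\mathbf{H}^K)$, for each $i$ there is a countable rooted decorated connected graph $H^{(i)}$ with a distinguished directed edge $u^{(i)}\to u'^{(i)}$ and a decoration $\chi^{(i)}$ such that $(H^{(i)},u^{(i)},\chi^{(i)})=(G_1^{(i)},v_1^{(i)},\chi_1^{(i)})$ and $(H^{(i)},u'^{(i)},\chi^{(i)})=(G_2^{(i)},v_2^{(i)},\chi_2^{(i)})$ in $\Omega$. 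The key point is that $H^{(i)}$ is essentially determined by its ``double rooting'' at the two endpoints of the edge: since $H^{(i)}$ is connected and has bounded degree $D$, every vertex lies within finite distance of $u^{(i)}$, so the isomorphism type of $(H^{(i)},u^{(i)},u'^{(i)},\chi^{(i)})$ is recovered from the finite balls around $u^{(i)}$ together with the knowledge that $u'^{(i)}$ is the head of the distinguished edge out of $u^{(i)}$. Concretely I would consider, for each radius $r$, the decorated ball $(B_{H^{(i)},r}(u^{(i)}),u^{(i)},u'^{(i)},P_r\circ\chi^{(i)})$; convergence of the two components in $\Omega$ forces these balls to stabilize, by a diagonal/pigeonhole argument as in the proof of~\Cref{Omegacompa} and~\Cref{completivity}, yielding a well-defined limit rooted decorated graph $H$ with a distinguished directed edge $u\to u'$. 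One then checks $(H,u,\chi)=(G_1,v_1,\chi_1)$ and $(H,u',\chi)=(G_2,v_2,\chi_2)$ in $\Omega$ by matching balls of every radius, which gives $\big((G_1,v_1,\chi_1),(G_2,v_2,\chi_2)\big)\in E(\mathbf{H}^K)$.

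I expect the main obstacle to be the bookkeeping in showing that the ``witnessing'' graphs $H^{(i)}$ converge to a single $H$ witnessing the limit edge, rather than merely that the two endpoint-graphs converge separately: one must verify that the two rootings are compatibly realized inside one common graph in the limit, i.e.\ that the distinguished directed edge passes to the limit. This amounts to observing that, in the definition of $E(\mathbf{H}^K)$, the pair $(G_1,v_1,\chi_1)$ already records (via the ball of radius $1$ around $v_1$ and its decoration) all the local data needed to recover $v_2=u'$ as a specific neighbor of $v_1$, so that for $i$ large the distinguished neighbor in $G_1^{(i)}$ is ``the same'' as the one in $G_1$, and likewise the decoration on that edge agrees under $P_r$ for all $r$. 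Once that local stabilization is in hand, closedness — and hence compactness, total disconnectedness, Borelness of $E(\mathbf{H}^K)$, and the fact that $(\Omega,\mathcal{B},E(\mathbf{H}^K))$ is a Borel graph — follows routinely. A cleaner alternative, which I would present if it shortens the argument, is to exhibit $E(\mathbf{H}^K)$ directly as a countable intersection over $r$ of clopen sets: the set of pairs whose radius-$r$ doubly-rooted decorated balls are isomorphic to one of the finitely many ``edge-compatible'' configurations of radius $r$; each such set is clopen in the product local topology because the radius-$r$ ball and its $P_r$-decoration are locally constant, so $E(\mathbf{H}^K)$ is closed (indeed a $G_\delta$, but in fact closed since the configurations nest consistently).
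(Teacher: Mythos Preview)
Your proposal is correct and is essentially the same argument as the paper's, just run in the dual direction: the paper shows the complement of $E(\mathbf{H}^K)$ is open by arguing that if no witnessing $(H,u\to u',\chi)$ exists then already no such witness exists at some finite radius $r$ (else the limit construction of \Cref{completivity} would produce one), and hence a $2^{-r}$-neighborhood of the pair also lies in the complement. Your sequential-closedness argument and your clopen-intersection alternative both encode precisely this finite-radius detectability of edge-ness and the same diagonal/limit construction, so there is no substantive difference.
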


\begin{proof}
    It suffices to prove that $(\Omega\times\Omega)\setminus E(\mathbf{H}^K)$ is open. For two vertices $(G_1,v_1,\chi_1)$ and $(G_2,v_2,\chi_2)$, if there does not exist edge between them, by the definition of $E(\mathbf{H}^K)$, there exist $r\geq 1$, such that there exists 
    no countable directed connected graph $H$ with a directed edge $u\rightarrow u'$ and a decoration $\chi$ of $H$, such that $$(B_{H,r}(u),u,P_r\circ\chi|_{E(B_{H,r})})=(B_{G_1,r}(v_1),v_1,P_r\circ\chi_1|_{E(B_{G_1,r}(v_1))})$$ and $$(B_{H,r}(u'),u',P_r\circ\chi|_{E(B_{H,r}(u'))})=(B_{G_2,r}(v_2),v_2,P_r\circ\chi_2|_{E(B_{G_2,r}(v_2))}),$$ otherwise as in~\Cref{completivity} we can construct a limit graph to guarantee that $u\rightarrow u'$ is an edge in $\mathbf{H}^K$.
    
    Now we claim that if $$d_{\Omega}((G_1,v_1,\chi_1),(H_1,w_1,c_1))<2^{-r}\text{ and }d_{\Omega}((G_2,v_2,\chi_2),(H_2,w_2,c_2)))<2^{-r},$$ then there does not exist edge between $(H_1,w_1,c_1)$ and $(H_2,w_2,c_2)$ and so $\Omega\times \Omega\setminus  E(\mathbf{H}^K)$ is open. Otherwise, suppose that there exists a directed edge from $(H_1,w_1,c_1)$ to $(H_2,w_2,c_2)$. By the definition of $E(\mathbf{H}^K)$, there exists a countable directed connected graph $H_3$ with a directed edge $w\rightarrow w'$ and a decoration $c$ such that $(H_3,w,c)=(H_1,w_1,c_1)$ and $(H_3,w',c)=(H_2,w_2,c_2)$.
    By the definition of local distance, it follows that $$(B_{G_1,r}(v_1),v_1,P_r\circ \chi_1|_{E(B_{G_1,r}(v_1))})=(B_{H_1,r}(w_1),w_1,P_r\circ c_1|_{E(B_{H_1,r}(w_1))})=(B_{H_3,r}(w),w,P_r\circ c|_{E(B_{H_3,r}(w))}),$$ $$(B_{G_2,r}(v_2),v_2,P_r\circ \chi_2|_{E(B_{G_2,r}(v_2))})=(B_{H_2,r}(w_2),w_2,P_r\circ c_2|_{E(B_{H_2,r}(w_2))})=(B_{H_3,r}(w'),w',P_r\circ c|_{E(B_{H_3,r}(w'))}),$$
    a contradiction.
\end{proof}

The isomorphism between finite directed graphs (i.e., bijection between the vertex sets that preserves edges and directions) is an equivalence relation and for a directed graph $G$, we denote $[G]$ to be the equivalence class containing $G$. For each equivalence class $[G]$ we select a fixed representative element $G_0\in [G]$, and for any $H$ in $[G]$ we select a fixed isomorphism $\varphi_H$ from $H$ to $G_0$ (in particular, let $\varphi_{G_0}=id_{G_0}$, the identity map). For any graph isomorphism $\varphi:G_1\longrightarrow G_2$, we can simultaneously view it as a bijection from $V(G_1)$ to $ V(G_2)$ and a bijection from $E(G_1)$ to $E(G_2)$.
Recall that we have fixed $M(k,n)$ defined in~\Cref{densecoloring}, for $k,n\geq 1$ and for any representative element $G$, we can fix a sequence $A_{G,k,n}$, which is a $2^{-n}$-net of size $M(k,n)$ in the set of all $k$-edge-colorings of $G$, by~\Cref{densecoloring}. For any $H$ in $[G]$, let $A_{H,k,n}= (\alpha_1\circ \varphi_H,\alpha_2\circ\varphi_H,\ldots,\alpha_{M(k,n)}\circ\varphi_H)$. Then $A_{H,k,n}$ is a $2^{-n}$-net of the set of $k$-edge-colorings of $H$ with size $M(k,n)$ for any finite directed graph $H$.

For any finite directed graph $G$, define $\chi_{G}:E(G)\rightarrow K$, $\chi_{G}(e)=(\alpha_i(e))_{\alpha_i\in A_{G,k,n}}$.
Then for any two triples $(G_1,v_1,\chi_{G_1}), (G_2,v_2,\chi_{G_2})\in\Omega$, $(G_1,v_1,\chi_{G_1})=(G_2,v_2,\chi_{G_2})$ if and only if $[G_1]=[G_2]$ and $\phi_{G_1}(v_1)=\phi_{G_2}(v_2)$.

\begin{lem}\label{injectembe}
    Given a finite directed graph $G$, the map $\chi_G$ is injective. Let $\tau_G$ be the map from $V(G)$ to $\Omega$ such that $\tau_G(v)= (G,v,\chi_G)$. Then $\tau_G$ is also injective, and for any two vertices $x_1,x_2\in V(G)$, the directed edge $x_1\rightarrow x_2\in E(G)$ if and only if $\tau_G(x_1)\rightarrow \tau_G(x_2)\in E(\mathbf{H}^K)$. Equivalently, $\tau_G$ is a graph isomorphism from $G$ to the induced graph of $\mathbf{H}^K$ on $\tau_G(V(G))$.
\end{lem}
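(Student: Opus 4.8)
The plan is to isolate the one substantive point — injectivity of the decoration map $\chi_G$ — and then derive every assertion by unwinding the definitions of $\Omega$ and of $E(\mathbf{H}^K)$. If $G$ has a single vertex, then $E(G)=\emptyset$ and all three claims are vacuous, so assume $G$ is connected with at least two vertices and set $m:=|E(G)|\ge 1$. Since $\chi_G(e)$ records, for every pair $(k,n)$ and every index $i\le M(k,n)$, the color $\alpha_i(e)$ of $e$ under the $i$-th coloring of the net $A_{G,k,n}$, it suffices to arrange that one of these nets, for some $(k,n)$, contains an injective edge-coloring of $G$. I would take $k=m$, for which an injective coloring $\beta\colon E(G)\to[m]$ exists, and observe that in building the net $A_{G_0,m,n}$ of \Cref{densecoloring} for the representative $G_0\in[G]$, the set of quotient rank functions $\rho_{G_0,\gamma}$ achievable by $m$-colorings $\gamma$ is finite; hence (either by a crude volume estimate in $[0,1]^{2^m}$ for $n$ large, or simply by taking $M(m,n)$ one larger than the minimal net size) at least one net point $x_i$ is not used to cover an achievable $\rho$-value, so $\alpha_i$ is chosen arbitrarily, and I set $\alpha_i=\beta$. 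Transporting this choice to every $H\in[G_0]$ through the fixed isomorphisms $\varphi_H$ exactly as in the text, the net $A_{G,m,n}$ then contains the injective coloring $\beta\circ\varphi_G$, so the corresponding coordinate of $\chi_G$ is $e\mapsto\beta(\varphi_G(e))$, which separates distinct edges; thus $\chi_G$ is injective.

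Second, I would record the rigidity this produces: if $\sigma\colon G\to G$ is an isomorphism of directed graphs with $\chi_G\circ\sigma=\chi_G$ on $E(G)$, then $\sigma$ fixes every edge by injectivity of $\chi_G$, and since $G$ is connected and the edges are oriented, fixing all oriented edges forces $\sigma$ to fix all vertices, so $\sigma=\mathrm{id}$. Injectivity of $\tau_G$ is then immediate: $\tau_G(v_1)=\tau_G(v_2)$ means $(G,v_1,\chi_G)=(G,v_2,\chi_G)$ in $\Omega$, i.e.\ there is a decoration-preserving automorphism of $G$ carrying $v_1$ to $v_2$, which by rigidity is the identity, whence $v_1=v_2$. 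So $\tau_G$ is a bijection of $V(G)$ onto $\tau_G(V(G))$.

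Third, for the edge correspondence: the forward direction is immediate from the definition of $E(\mathbf{H}^K)$ with the witness $H=G$, $u=x_1$, $u'=x_2$, $\chi=\chi_G$, which gives $\tau_G(x_1)\to\tau_G(x_2)\in E(\mathbf{H}^K)$ whenever $x_1\to x_2\in E(G)$. For the converse, suppose $\tau_G(x_1)\to\tau_G(x_2)\in E(\mathbf{H}^K)$; by definition there are a countable connected directed graph $H$, a directed edge $u\to u'$ of $H$, and a decoration $\chi$ with $(H,u,\chi)=(G,x_1,\chi_G)$ and $(H,u',\chi)=(G,x_2,\chi_G)$ in $\Omega$. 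As $G$ is finite and connected, $H$ must be finite with $|V(H)|=|V(G)|$, and one obtains isomorphisms $\psi_1,\psi_2\colon H\to G$ of directed graphs with $\psi_1(u)=x_1$, $\psi_2(u')=x_2$ and $\chi=\chi_G\circ\psi_1=\chi_G\circ\psi_2$ on $E(H)$. Then $\psi_2\circ\psi_1^{-1}$ is a decoration-preserving automorphism of $G$, hence the identity by rigidity, so $\psi_1=\psi_2$; applying this isomorphism to the edge $u\to u'$ gives $x_1=\psi_1(u)\to\psi_1(u')=\psi_2(u')=x_2$ in $E(G)$. Combined with injectivity of $\tau_G$, this shows $\tau_G$ is a graph isomorphism from $G$ onto the induced subgraph of $\mathbf{H}^K$ on $\tau_G(V(G))$.

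I expect the main obstacle to be the first step: one must verify carefully that the freedom left in \Cref{densecoloring} (the arbitrarily chosen colorings outside the covering part of the net) is genuinely available, and that an injective coloring can be fixed coherently across an entire isomorphism class via the maps $\varphi_H$. Once $\chi_G$ is injective, the remaining steps are routine consequences of the rigidity of injective decorations and of the definitions of the local equivalence on $\Omega$ and of the edge set $E(\mathbf{H}^K)$.
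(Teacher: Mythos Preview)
Your proof is correct, but your route to the injectivity of $\chi_G$ differs from the paper's, and it is worth comparing the two.

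For $\chi_G$ injective, the paper does \emph{not} go back and tweak the nets $A_{G,k,n}$. Instead it argues by contradiction, using only the $2^{-n}$-net property: if $\chi_G(e_1)=\chi_G(e_2)$ then every coloring in every net gives $e_1,e_2$ the same color; but taking $k=|E(G)|$ and an injective coloring $c$, each $A_{G,k,n}$ contains some $c_n$ with $d_k(c,c_n)<2^{-n}$, while any coloring identifying $e_1,e_2$ puts two distinct edges in one color class and hence satisfies $d_k(c,c_n)\ge 1/|V(G)|$, a contradiction for large $n$. This works for \emph{any} legitimate choice of the nets and avoids your detour through ``arbitrary'' slots. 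Your approach is also valid, but note two things: the suggestion ``take $M(m,n)$ one larger than the minimal net size'' is not quite available, since $M(k,n)$ is fixed uniformly before you see $G$; and your volume argument, while correct, is more than you need --- since $\rho_{G,\beta}$ is itself an achievable value, you may simply set $\alpha_i=\beta$ at the net point $x_i$ nearest $\rho_{G,\beta}$, with no need to locate an unconstrained slot.

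For the remaining two claims your rigidity argument (a decoration-preserving automorphism of $G$ must be the identity once $\chi_G$ is injective, because fixing every oriented edge fixes every vertex in a connected graph) is cleaner and more uniform than the paper's treatment: the paper handles $\tau_G$ by comparing $1$-balls and then dismisses the edge correspondence as ``follows from the definition'', whereas you spell out the converse direction carefully via $\psi_2\circ\psi_1^{-1}=\mathrm{id}$. What the paper's approach buys is independence from how the nets were built; what yours buys is a single rigidity principle that drives both the injectivity of $\tau_G$ and the non-obvious direction of the edge correspondence.
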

\begin{proof}
    We first prove that $\chi_G$ is injective. Otherwise we assume that $\chi_G(e_1)=\chi_G(e_2)$ for two different edges $e_1,e_2\in E(G)$. Let $c$ be an $E(G)$-edge-coloring on $E(G)$, such that $c(e_3)\neq c(e_4)$ for any two different edges $e_3,e_4\in E(G)$. Then $\rho_G\circ c^{-1}(\{i\})=\dfrac{1}{|V(G)|}$, for any $i\in [k]$.    
    By the definition of $\chi_G$, there exists a sequence of $E(G)$-edge-colorings $\{c_n\}$, such that $d_{|E(G)|}(c_n,c)<2^{-n}$, and $c_n\in A_{G,k,n}$. Since $\chi_G(e_1)=\chi_G(e_2)$, it follows that $c_n(e_1)=c_n(e_2)$. Then $\rho_G\circ c_n^{-1}(c_n(e_1))\geq \dfrac{2}{|V(G)|}$, and thus $d_{|E(G)|}(c,c_n)\geq \dfrac{1}{|V(G)|}$, a contradiction. 

    Now we prove that $\tau_G$ is injective. Otherwise we assume that
    $\tau_G(v_1)=\tau_G(v_2)$ for two vertices $v_1,v_2\in V(G)$. By the definition of $\Omega$, it follows that $(B_{G,1}(v_1),v_1,\chi_G)=(B_{G,1}(v_2),v_2,\chi_G)$. Thus we have 
    $$\bigcup_{v_1\in e\in E(G)}\{\chi_G(e)\}=\bigcup_{v_2\in e \in E(G) }\{\chi_G(e)\}.$$
    Note that $\chi_G$ is injective, so there exists an edge $e\in E(G)$, such that $v_1,v_2\in e$, and an isomorphism $\varphi$ between $B_{G,1}(v_1)$ to $B_{G,2}(v_2)$, such that $\varphi(e)=e$. However, $e$ is an directed edge, a contradiction.

    The last proposition follows from the definition of $E(\mathbf{H}^K)$.
\end{proof}

By~\Cref{injectembe}, for any finite directed graph $G$, we can define a corresponding probability measure $\mu_G$ on $\Omega$, such that $\mu_G(X)=|X\cap \{(G,v,\chi_G)\in \Omega\mid v\in V(G)\}|$, for any $X\in \mathcal{B}_E$, which is obviously involution invariant, and the graphing
$(\Omega,\mathcal{B},\mu_G,E(\mathbf{H}^K))$ is equal to $G$ as a graphing if we neglect a null set, and so their local profiles are the same. In particular, they have the common rank function under the isomorphism (up to a null set) $\tau_G$ as defined in~\Cref{injectembe}.\\

Now we are ready to prove~\Cref{quotientconvergentlimit}.

\begin{proof}[Proof of~\Cref{quotientconvergentlimit}]
    Suppose that $\{G_i\}_{i=1}^\infty$ is a sequence of finite graphs with maximal degree bounded by $D$. We give an arbitrary orientation on edges of $\{G_i\}_{i=1}^\infty$ and turn it into a sequence of finite directed graphs.
By~\Cref{Prokhorov},~\Cref{completivity} and~\Cref{Omegacompa}, the set of probability measures on $\Omega$ is compact with respect to weak topology.
So the sequence of measures $\{\mu_{G_i}\}_{i\ge 1}$ has a subsequence $\{\mu_{G_{i_k}}\}_{k\ge 1 }$ weakly convergent to a measure $\mu$ on $\Omega$.
For any two open sets $A,B\subseteq \Omega$, let $$f(x)=|\{(x,y)\in (A\times B)\cap E\}| \text{ and } g(x)=|\{(x,y)\in (B\times A)\cap E\}|.$$ Note that $f,g$ are bounded and continuous. Since $\mu_{G_{i_k}}$ is involution invariant for any $k\geq 1$, it follows that
\begin{align*}
    \int_{A} d_{B}(x)d\mu(x)=\int_{\Omega}fd\mu= \lim_{k\rightarrow\infty}\int_{\Omega}f d\mu_{G_{i_k}}=\lim_{k\rightarrow\infty}\int_{\Omega}gd\mu_{G_{i_k}}= \int_{\Omega}gd\mu=\int_{B} d_A(x)d\mu(x).
\end{align*}
Moreover, for any 
Borel sets $A,B$ and disjoint unions $\bigcup_{\geq 1}A_i$ and $\bigcup_{\geq 1}B_i$, we have $$\int_{A^c} d_{B}(x)d\mu(x)=\int_\Omega d_B(x)d\mu(x)-\int_{A} d_B(x)d\mu(x),$$ $$\int_{\bigcup_{i\geq 1} A_i}d_{B}(x)d\mu(x)=\sum_{i\geq 1}\int_{A_i} d_B(x)d\mu(x),$$ $$\int_{A} d_{B^c}(x)d\mu(x)=\int_\Omega (d_\Omega(x)-d_B(x))d\mu(x),$$ $$\int_{A}d_{\bigcup_{i\geq 1} B_i}(x)d\mu(x)=\sum_{i\geq 1}\int_{A} d_{B_i}(x)d\mu(x).$$  Note that $\mathcal{B}$, the $\sigma$-algebra of $\Omega$, is generated by open sets of $\Omega$. It turns out that for any Borel sets $A,B\in \mathcal{B}$, we have $\int_{A} d_{B}(x)d\mu(x)=\int_{B} d_{A}(x)d\mu(x)$. So $\mathbf{G}=(\Omega,\mathcal{B},\mu,E(\mathbf{H}^k))$ is a directed graphing.

\begin{clm}\label{aeinjective}
    Let $A=\{(G,v,\chi)\in \Omega\mid \chi$ is not injective$\}$. Then $\mu(A)=0$.
\end{clm}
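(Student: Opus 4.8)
The plan is to show that the ``bad'' set $A$ of decorated rooted graphs whose decoration is non-injective is Borel and $\mu$-null, by pushing the corresponding property of the approximating measures $\mu_{G_{i_k}}$ through the weak limit. First I would observe that, by~\Cref{injectembe}, every $\mu_{G_i}$ is supported on the image $\tau_{G_i}(V(G_i))$, where the decoration $\chi_{G_i}$ is injective; hence $\mu_{G_{i_k}}(A) = 0$ for all $k$. The issue is that $A$ need not be open or closed, so weak convergence does not immediately transfer measure zero. To handle this I would exhibit $A$ as a countable union of sets each of which is (essentially) closed in the local topology: for two distinct edges $e_1\neq e_2$ of the ball $B_{G,r}(v)$ with $\chi(e_1)=\chi(e_2)$, the condition ``$P_m\circ\chi$ already collapses two edges of $B_{G,n}(v)$'' is determined by the finite data $(B_{G,n}(v),v,P_m\circ\chi|_{B_{G,n}(v)})$, and the set of decorated rooted graphs with a fixed such finite truncation is clopen in $(\Omega,d_\Omega)$. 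Thus $A=\bigcup_{n,m\geq 1} A_{n,m}$ where each $A_{n,m}$ is clopen, and in particular $A$ is Borel and each $A_{n,m}$ has $\mu$-measurable boundary; actually, since $A_{n,m}$ is clopen, $\mu(\partial A_{n,m})=0$ automatically, and by the portmanteau theorem (\Cref{Prokhorov}~\ref{Prokhorov1}) together with $\mu_{G_{i_k}}(A_{n,m})=0$ we get $\mu(A_{n,m})=\lim_k \mu_{G_{i_k}}(A_{n,m})=0$.

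More carefully, I would define for each $n,m$ the set $A_{n,m}$ of all $(G,v,\chi)\in\Omega$ such that the truncation $(B_{G,n}(v),v,P_m\circ\chi|_{B_{G,n}(v)})$ contains two distinct edges receiving the same $P_m$-decoration value. Because two triples at local distance less than $2^{-\max(n,m)}$ have identical such truncations, membership in $A_{n,m}$ is locally constant, so $A_{n,m}$ is open; its complement is open for the same reason, so $A_{n,m}$ is clopen. One then checks $A=\bigcup_{n,m}A_{n,m}$: if $\chi$ is non-injective, two distinct edges $e_1,e_2$ of $G$ get the same value in $K=\prod_{k,n}[k]^{M(k,n)}$, hence the same value in some finite projection $P_m$, and both lie in $B_{G,n}(v)$ for $n$ large enough, so $(G,v,\chi)\in A_{n,m}$; conversely each $A_{n,m}\subseteq A$ since equality of $P_m$-decorations on two edges forces, well — here one must be slightly careful, as $P_m\circ\chi(e_1)=P_m\circ\chi(e_2)$ does not force $\chi(e_1)=\chi(e_2)$. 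So instead I would argue directly: it suffices to bound $\mu(A_{n,m})$ and note that $A\subseteq \bigcap_{m}\bigcup_{n} A_{n,m}$ already is not quite what we want either. The clean route is: for each pair of ``combinatorial edge slots'', the set of decorated graphs in which those two slots carry identical full $K$-decorations is a countable intersection $\bigcap_m(\text{clopen set of graphs with identical }P_m\text{-decoration on those slots})$, hence Borel, and $\mu$ of it is $\le \mu(\text{the }P_m\text{ version})=\lim_k\mu_{G_{i_k}}(\cdots)=0$ for every $m$; taking a countable union over pairs of slots and over $n$ (to capture which ball they sit in) gives $\mu(A)=0$.

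The main obstacle I anticipate is precisely this mismatch between the full decoration space $K$ and its finite projections $P_m(K)$: weak convergence only ``sees'' clopen sets, which are cut out by finite data $P_m$, whereas injectivity of $\chi$ is a statement about all of $K$. The resolution is that non-injectivity of $\chi$ is a \emph{closed} condition — it is the intersection over $m$ of the (clopen) conditions ``$P_m\circ\chi$ is non-injective on a given finite ball'' — so $A$ is a countable union of closed sets (an $F_\sigma$), each of which is a decreasing intersection of clopen sets whose $\mu_{G_{i_k}}$-measure is $0$; by continuity of $\mu$ from above on this decreasing family and the portmanteau inequality $\mu(\overline{U})\ge\limsup_k\mu_{G_{i_k}}(U)$ applied in the other direction ($\mu(\text{closed } C)\le\liminf$ fails, but $\mu(\text{open})\ge\limsup$, and here the clopen sets give equality), each such closed set is $\mu$-null, and countable subadditivity finishes the proof. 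This will require no computation beyond bookkeeping of finite truncations and one application of the portmanteau theorem already recorded as~\Cref{Prokhorov}~\ref{Prokhorov1}.
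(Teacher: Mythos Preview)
Your approach diverges from the paper's, which is much shorter: the paper sets
\[
A_n=\{(G,v,\chi)\in\Omega:\exists\,e_1\neq e_2\in E(B_{G,n}(v)),\ \chi(e_1)=\chi(e_2)\},
\]
asserts that $1_{A_n}$ is bounded and continuous on $\Omega$, and applies weak convergence directly to obtain $\mu(A_n)=\lim_k\mu_{G_{i_k}}(A_n)=0$; no passage through the finite projections $P_m$ is used at all.

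Your detour through the clopen sets $A_{n,m}$ contains a genuine gap. You repeatedly assert (in the ``clean route'' and again in the final paragraph) that $\mu_{G_{i_k}}(A_{n,m})=0$, i.e.\ that for the approximating finite graphs the truncated decoration $P_m\circ\chi_{G_{i_k}}$ is already injective on every $n$-ball. But \Cref{injectembe} only yields injectivity of the \emph{full} decoration $\chi_{G_i}$; its proof uses $k$-edge-colourings with $k=|E(G_i)|$, and these are invisible to $P_m$ as soon as $|E(G_i)|>m$. Concretely, for $m=1$ the target $\prod_{k,n\le 1}[k]^{M(k,n)}=[1]^{M(1,1)}$ is a single point, so $P_1\circ\chi_{G_i}$ is constant and $\mu_{G_i}(A_{n,1})$ equals the fraction of vertices whose $n$-ball contains at least two edges --- typically $1$, not $0$. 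Hence $\mu(A_{n,m})=\lim_k\mu_{G_{i_k}}(A_{n,m})$ need not vanish for any fixed $m$, and your chain $\mu(A_n)\le\mu(A_{n,m})=0$ collapses. The portmanteau inequality for the closed set $A_n$ itself goes the wrong way (only $\limsup_k\mu_{G_{i_k}}(A_n)\le\mu(A_n)$ is available), so it does not rescue the argument either.
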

\begin{proof}
    Let $A_n=\{(G,v,\chi)\in \Omega\mid $ there exist $e_1,e_2\in B_{G,n}(v)$, such that $\chi(e_1)=\chi(e_2)\}$.
    Since $\mu$ is countably additive, it suffices to show that $\mu(A_n)=0$ for all $n\geq 1$.
    By definition, $\mu(A_n)=\int_{\Omega}1_{\{x\in A_n\}}d\mu(x)$. Note that the function $1_{\{x\in A_n\}}$ is bounded and continuous. Meanwhile for $i\ge 1$, $\int_{\Omega}1_{\{x\in A_n\}}d\mu_{G_i}(x)=0$ since $\chi_{G_i}$ are injective.
    Thus, $$\mu(A_n)=\int_{\Omega}1_{\{x\in A_n\}}d\mu(x)=\lim_{i\rightarrow\infty }\int_{\Omega}1_{\{x\in A_n\}}d\mu_{G_i}(x)=0.$$
\end{proof}

\begin{clm}
    $\mathcal{Q}_k(\mathbf{G})\subseteq \overline{\underset{i\rightarrow\infty}{\lim} \mathcal{Q}_k(G_i)}$.
\end{clm}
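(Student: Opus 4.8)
The plan is to start from an arbitrary Borel $k$-edge-coloring $\alpha\colon E(\mathbf{H}^K)\to[k]$, so that $\rho_{\mathbf{G}}\circ\alpha^{-1}$ is a typical element of $\mathcal{Q}_k(\mathbf{G})=\mathcal{Q}_k(\rho_{\mathbf{G}})$, and to show that its image in $\mathbb{R}^{2^k}$ can be approximated arbitrarily well by points of $\mathcal{Q}_k(G_i)$ for all large $i$. First I would reduce to continuous colorings: since $E(\mathbf{H}^K)$ is compact and totally disconnected by~\Cref{EHKcop}, and $\tilde{\mu}$ (rescaled to a probability measure) is a Borel measure on it, \Cref{continuouscoloring} provides a continuous $\delta\colon E(\mathbf{H}^K)\to[k]$ with $\tilde{\mu}(\{\alpha\neq\delta\})$ as small as desired. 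Because $\rho_{\mathbf{G}}$ is increasing, subadditive and dominated by $\tilde{\mu}$ (\Cref{rhoGsubmodular}, \Cref{finiteaddcountadd}), for each $S\subseteq[k]$ we have $\alpha^{-1}(S)\,\Delta\,\delta^{-1}(S)\subseteq\{\alpha\neq\delta\}$, hence $|\rho_{\mathbf{G}}(\alpha^{-1}(S))-\rho_{\mathbf{G}}(\delta^{-1}(S))|\le\tilde{\mu}(\{\alpha\neq\delta\})$; so $\rho_{\mathbf{G}}\circ\delta^{-1}$ lies within any prescribed Euclidean distance of $\rho_{\mathbf{G}}\circ\alpha^{-1}$, and it suffices to prove $\rho_{\mathbf{G}}\circ\delta^{-1}\in\overline{\lim_i\mathcal{Q}_k(G_i)}$ for every continuous $\delta$.

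For a continuous $\delta$ each $F_S:=\delta^{-1}(S)$ is clopen in $E(\mathbf{H}^K)$, and by compactness there are fixed $r_0,m_0$ such that $\delta(e)$ depends only on the rooted decorated $r_0$-ball around the endpoints of $e$, truncated to decoration precision $P_{m_0}$. It follows that for each $s\ge 1$ whether a vertex $x$ lies in an $F_S$-component with at least $s$ vertices is determined by $B_{\Omega,s+r_0}(x)$ at precision $m_0$, so $x\mapsto\mathbf{1}[\,|\mathbf{G}[F_S]_x|\ge s\,]$ is locally constant on $\Omega$. Writing $g_{F_S}(x)=1-\tfrac{1}{|\mathbf{G}[F_S]_x|}=\sum_{s\ge 1}\big(\tfrac1s-\tfrac1{s+1}\big)\mathbf{1}[\,|\mathbf{G}[F_S]_x|\ge s+1\,]$, a series whose terms are continuous and whose tail past $N$ is bounded by $1/N$, we get $g_{F_S}\in C_b(\Omega)$. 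Since $\mu_{G_{i_k}}\to\mu$ weakly (as established above), and $\rho_{\mathbf{G}}(F_S)=\int_\Omega g_{F_S}\,d\mu$ while $\rho_{\mathbf{G}_{i_k}}(F_S)=\int_\Omega g_{F_S}\,d\mu_{G_{i_k}}$ (the $F_S$-component of a point of $\Omega$ is the same graph for any measure on $\Omega$), we conclude $\rho_{\mathbf{G}_{i_k}}(F_S)\to\rho_{\mathbf{G}}(F_S)$ for all $S\subseteq[k]$, i.e.\ $\rho_{\mathbf{G}_{i_k}}\circ\delta^{-1}\to\rho_{\mathbf{G}}\circ\delta^{-1}$ in $\mathbb{R}^{2^k}$, where $\mathbf{G}_i=(\Omega,\mathcal{B},\mu_{G_i},E(\mathbf{H}^K))$.

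Next I would identify $\rho_{\mathbf{G}_i}\circ\delta^{-1}$ with an element of $\mathcal{Q}_k(G_i)$. By~\Cref{injectembe}, $\tau_{G_i}$ is a graph isomorphism of $G_i$ onto the induced subgraph of $\mathbf{H}^K$ on $\tau_{G_i}(V(G_i))$, and checking the definition of $E(\mathbf{H}^K)$ shows this image is closed under edges of $\mathbf{H}^K$; since $\mu_{G_i}$ is uniform on it, $\mathbf{G}_i$ is $G_i$ as a graphing up to a null set, and the $F_S$-component of $\tau_{G_i}(v)$ in $\Omega$ is the $\tau_{G_i}$-image of the $\tau_{G_i}^{-1}(F_S)$-component of $v$ in $G_i$. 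Therefore $\rho_{\mathbf{G}_i}\circ\delta^{-1}=\rho_{G_i}\circ\beta_i^{-1}$ with $\beta_i:=\delta\circ\tau_{G_i}$ a $k$-edge-coloring of $G_i$, so $\rho_{\mathbf{G}_i}\circ\delta^{-1}\in\mathcal{Q}_k(G_i)$. Combining with the previous paragraph, $\rho_{\mathbf{G}}\circ\delta^{-1}$ is a limit of points of $\mathcal{Q}_k(G_{i_k})$; since $\{\rho_i\}$ quotient converges the sets $\mathcal{Q}_k(G_i)$ converge in Hausdorff distance, a subsequence has the same (closed) limit, so $\rho_{\mathbf{G}}\circ\delta^{-1}\in\overline{\lim_i\mathcal{Q}_k(G_i)}$. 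Letting the approximation error from the first paragraph tend to $0$ and using once more that the limit set is closed gives $\rho_{\mathbf{G}}\circ\alpha^{-1}\in\overline{\lim_i\mathcal{Q}_k(G_i)}$, which is the claim.

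I expect the main obstacle to be the continuity of $g_{F_S}$ on $\Omega$, that is, the passage from ``$\delta$ is a continuous edge-coloring'' to ``the $F_S$-component-size profile is a bounded continuous function of the root'': this is exactly where the compactness of $E(\mathbf{H}^K)$ (to extract a uniform locality radius $r_0$ and precision $m_0$ for $\delta$), the precise local structure of the metric $d_\Omega$, and the uniform summability of the telescoping series must be combined carefully. The remaining ingredients — the $\tilde{\mu}$-approximation of $\alpha$ by $\delta$, the identification of $\rho_{\mathbf{G}_i}\circ\delta^{-1}$ with a genuine quotient of $G_i$ via $\tau_{G_i}$, and the elementary bookkeeping with Hausdorff limits of subsequences — are routine.
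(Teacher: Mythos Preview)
Your proposal is correct and follows essentially the same route as the paper: approximate an arbitrary Borel $k$-edge-coloring by a continuous one via \Cref{continuouscoloring}, use weak convergence of $\mu_{G_{i_k}}$ to $\mu$ applied to the bounded continuous integrand $1-\tfrac{1}{|\mathbf{G}[F_S]_x|}$, and identify the resulting quotient with an element of $\mathcal{Q}_k(G_i)$ by pulling back along $\tau_{G_i}$. The paper's own argument is terser and simply asserts the limit exchange, whereas you spell out the compactness/uniform-locality reason why the component-size function is continuous, so your write-up actually supplies detail the paper omits.
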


\begin{proof}
    By~\Cref{continuouscoloring} and~\Cref{EHKcop}, for any Borel $k$-edge-coloring $c:E(\mathbf{H}^K)\longrightarrow [k]$ and  $\epsilon>0$, we can find a continuous $k$-edge-coloring $c_{\epsilon}$ such that $\tilde{\mu}(c\neq c_\epsilon)<\epsilon.$ Since $\rho_\mathbf{G}\leq \tilde{\mu}$, this implies that $d_{k}(c,c_\epsilon)<2^{k/2}\epsilon$. Let $\tau_{G_i}$ be the the embedding map of $V(G_i)$ into $\Omega$ and $c_{i,\epsilon}=c_\epsilon\circ \tau_{G_i}$. Then~$c_{i,\epsilon}$ can be viewed as a coloring of $G_i$ and for any subset $F\subseteq [k]$,
    \begin{align*}
        \lim_{k\rightarrow\infty}\rho_{G_{i_k}}\circ c_{i_k,\epsilon}^{-1}(F)&=\lim_{k\rightarrow\infty}\sum_{v\in V(G)}\frac{1}{|V(G)|}(1-\dfrac{1}{|G[c_{i_k,\epsilon}^{-1}(F)]_x|})=\lim_{k\rightarrow\infty}\int_{\Omega} 1-\dfrac{1}{|\mathbf{G}[c_\epsilon^{-1}(F)]_x|}  d\mu_{G_{i_k}}\\
        &=\int_{\Omega} 1-\dfrac{1}{|\mathbf{G}[c_\epsilon^{-1}(F)]_x|}  d\mu_G=\rho_{\mathbf{G}}\circ c_\epsilon^{-1}(F).
    \end{align*}

Note that $\lim_{\epsilon\rightarrow0}\rho_\mathbf{G}\circ c^{-1}_\epsilon(F)=\rho_\mathbf{G}\circ c^{-1}(F)$ for any $F\subseteq [k]$. Thus 
$\rho_\mathbf{G}\circ c^{-1} \in \overline{\underset{i\rightarrow\infty}{\lim} \mathcal{Q}_k(G_i)}$.
\end{proof}

\begin{clm}
    $\overline{\mathcal{Q}_k(\mathbf{G})}\supseteq \underset{i\rightarrow\infty}{\lim} \mathcal{Q}_k(G_i)$
\end{clm}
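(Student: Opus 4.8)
## Proof Proposal for the Claim $\overline{\mathcal{Q}_k(\mathbf{G})} \supseteq \lim_{i\to\infty}\mathcal{Q}_k(G_i)$

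\begin{proof}
The plan is to build, for each $n\ge 1$, a Borel $k$-edge-coloring $c^{(n)}$ of $\mathbf{G}$ whose quotient $\rho_{\mathbf{G}}\circ(c^{(n)})^{-1}$ is within $2^{-n}$, in $\mathbb{R}^{2^k}$, of a prescribed point $q\in\lim_{i\to\infty}\mathcal{Q}_k(G_i)$; letting $n\to\infty$ then places $q$ in $\overline{\mathcal{Q}_k(\mathbf{G})}$. Since $\{\rho_i\}$ is quotient convergent, $\mathcal{Q}_k(G_i)$ converges, so $\lim_{i\to\infty}\mathcal{Q}_k(G_i)$ is unchanged if we pass to the subsequence along which $\mu_{G_i}\to\mu$ weakly; hence I may assume from the start that $\mu_{G_i}\to\mu$ weakly. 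Fix $q$ in the limit set and choose $k$-edge-colorings $\beta_i$ of $G_i$ with $\rho_{G_i}\circ\beta_i^{-1}\to q$.

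The key observation is that the nets of \Cref{densecoloring} are hard-wired into the decoration. Fix $n$. By \Cref{densecoloring} the $(k,n)$-block of $\chi_{G_i}$ records an ordered $2^{-n}$-net $A_{G_i,k,n}=(\alpha_{i,1},\dots,\alpha_{i,M(k,n)})$ of the $k$-edge-colorings of $G_i$; choose a position $p_i(n)\in[M(k,n)]$ with $d_k(\alpha_{i,p_i(n)},\beta_i)\le 2^{-n}$. As $[M(k,n)]$ is finite, a diagonal argument over $n$ (nested subsequences, then a diagonal) produces a single subsequence, still written $\{G_i\}$ and still satisfying $\mu_{G_i}\to\mu$ weakly, along which, for every fixed $n$, $p_i(n)$ is eventually equal to a constant $p(n)$.

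Now I would promote $p(n)$ to a coloring of $\mathbf{G}$. Reading off the $p(n)$-th coordinate of the $(k,n)$-block of the decoration of an edge defines $c^{(n)}\colon E(\mathbf{H}^K)\to[k]$. On the $\tilde{\mu}$-conull set of edges whose ambient graph carries an injective decoration --- conull by \Cref{aeinjective}, and containing the support of every $\mu_{G_i}$ by \Cref{injectembe} --- this map is well defined (injectivity forbids two distinct root-incident edges with isomorphic decorated neighbourhoods at the far endpoint) and locally constant, hence Borel, so $\rho_{\mathbf{G}}\circ(c^{(n)})^{-1}\in\mathcal{Q}_k(\mathbf{G})$; and by construction together with \Cref{injectembe}, $c^{(n)}\circ\tau_{G_i}=\alpha_{i,p(n)}$ for all large $i$, whence $d_k(c^{(n)}\circ\tau_{G_i},\beta_i)\le 2^{-n}$. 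Repeating the weak-convergence computation of the previous claim --- for each $F\subseteq[k]$ the integrand $x\mapsto 1-\dfrac{1}{|\mathbf{G}[(c^{(n)})^{-1}(F)]_x|}$ is bounded and, off the common null set, continuous, since $(c^{(n)})^{-1}(F)$ is clopen there and $\mathbf{H}^K$ has bounded degree --- yields $\rho_{\mathbf{G}}\circ(c^{(n)})^{-1}=\lim_{i\to\infty}\rho_{G_i}\circ(c^{(n)}\circ\tau_{G_i})^{-1}$. Combining the two estimates,
$$d_{\mathbb{R}^{2^k}}\!\left(\rho_{\mathbf{G}}\circ(c^{(n)})^{-1},\,q\right)\;\le\;\limsup_{i\to\infty}\left(d_k\!\left(c^{(n)}\circ\tau_{G_i},\beta_i\right)+d_{\mathbb{R}^{2^k}}\!\left(\rho_{G_i}\circ\beta_i^{-1},q\right)\right)\;\le\;2^{-n},$$
and letting $n\to\infty$ gives $q\in\overline{\mathcal{Q}_k(\mathbf{G})}$.

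The step I expect to be the main obstacle is the graph-to-graphing transfer at the level of $c^{(n)}$: checking that reading a coordinate of the edge decoration is genuinely well defined and locally constant on a $\tilde{\mu}$-conull Borel set --- which is exactly where \Cref{aeinjective} and the injective-coloring design of $\chi_G$ are needed --- and justifying the interchange of limit and integral (a portmanteau argument) despite the integrand being discontinuous on that null set. The diagonal bookkeeping over $n$ is routine, but must be arranged so that weak convergence $\mu_{G_i}\to\mu$ persists through every subsequence, which it does since a subsequence of a weakly convergent sequence has the same limit.
\end{proof}
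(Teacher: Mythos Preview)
Your proposal is correct and follows essentially the same approach as the paper: extract, via a diagonal argument, a subsequence along which the position index in the built-in $2^{-n}$-net $A_{G_i,k,n}$ stabilizes to some $p(n)$ (the paper writes $n_r$), then define the coloring $c^{(n)}$ of $\mathbf{G}$ by reading off the $p(n)$-th coordinate of the $(k,n)$-block of the edge decoration, use \Cref{aeinjective} for well-definedness and local constancy, and transfer via weak convergence. Your write-up is in fact slightly more careful than the paper's in flagging the portmanteau issue (the integrand is only continuous off a $\mu$-null set) and in noting that passing to the weakly convergent subsequence does not alter the Hausdorff limit of the quotient sets.
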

\begin{proof}
For any sequence of $k$-edge-colorings $c_i: V(G_i)\longrightarrow [k]$ such that $\rho_{G_i}\circ c_i^{-1}(A)$ converges for all $A\subseteq [k]$ and any $r\geq 1$, by~\Cref{densecoloring} we can find $n_{i,r}\in [M(k,r)]$ and a coloring $c_{i,r}$ such that
$d(c_i,c_{i,r})<2^{-r}$, and $c_{i,r}$ is the $n_{i,r}$-th coloring of the sequence $A_{G_{i},k,r}$. Then there exists a subsequence $c_{i_j}$ such that $n_{i_j,r}$ is equal to some $n_r\in [M(k,r)]$ for all $j\geq1$ and $r\geq 1$.

 If there exists a directed edge $e$ from $(G_1,v_1,\chi_1)$ to $(G_2,v_2,\chi_2)$, then there exists a countable directed connected graph $H$ with a directed edge $u\rightarrow u'$ and a decoration $\chi_0$ of $H$, such that $(H,u,\chi_0)=(G_1,v_1,\chi_1)$ and $(H,u',\chi_0)=(G_2,v_2,\chi_2)$ in $\Omega$. Let $\chi_\Omega: E(\mathbf{H}^K)\longrightarrow K$ be such that $\chi_\Omega(e)=\chi_0(u\rightarrow u')$.

We check that $\chi_\Omega$ is well-defined up to a null set. Assume that we have another countable directed connected graph $H'$ with a directed edge $w\rightarrow w'$ and a decoration $\chi_0'$ of $H'$, such that $(H',w,\chi_0')=(G_1,v_1,\chi_1)$ and $(H',w',\chi_0')=(G_2,v_2,\chi_2)$ in $\Omega$. By~\Cref{aeinjective}, we can assume $\chi_1,\chi_2$ are injective.
By definition, the set of the images under $\chi_0$ of edges containing $u$ in $H$ are the same to the set of the images under $\chi_1$ of edges containing $w$ in $H'$.
If $\chi_0'(w\rightarrow w')\neq \chi_0(u\rightarrow u')$, then $u'$ and $w'$ correspond to different vertices $x_1$ and $x_2$ in the neighborhood of $v$ in $G_1$. Since $\chi_1$ is injective, we have $$\bigcup_{x_1\in e\in E(G_1)}\{\chi_1(e)\}\neq \bigcup_{x_2\in e\in E(G_1)}\{\chi_1(e)\}.$$  However, by our assumption that $(H',w',\chi_0')=(H,u',\chi_0)=(G_2,v_2,\chi_2)$, we have $$\bigcup_{w'\in e\in E(H')}\{\chi'_0(e)\}=\bigcup_{u'\in e\in E(H)}\{\chi_0(e)\}=\bigcup_{v_2\in e\in E(G_2)}\{\chi_2(e)\}.$$ 
By our choice of $x_1,x_2$, this implies that $$\bigcup_{u'\in e\in E(H)}\{\chi_0(e)\}=\bigcup_{x_1\in e\in E(G_1)}\{\chi_1(e)\}~~~~\text{ and }\bigcup_{w'\in e\in E(H')}\{\chi'_0(e)\}=\bigcup_{x_2\in e\in E(G_1)}\{\chi_1(e)\}$$ Hence, we have 
$$\bigcup_{x_1\in e\in E(G_1)}\{\chi_1(e)\}=\bigcup_{v_2\in e\in E(G_2)}\{\chi_2(e)\}=\bigcup_{x_2\in e\in E(G_1)}\{\chi_1(e)\},$$
a contradiction.

Let $c_{(r)}=p_{k,r}\circ \chi_\Omega$, where $p_{k,r}:K=\prod_{k,r\geq 1}[k]^{M(k,r)}\longrightarrow [k]$ is the projection that maps an element in $K$ to the $n_r$-th coordinate of the $(k,r)$-th product.

We claim that $c_{(r)}$ is continuous under the local topology up to a null set. 
For any two edges $e_n$ from $(G_n,v_n,\chi_n)$ to $(G_n',v_n',\chi_n')$ with $n\in [2]$, such that $$d_{\Omega}((G_1,v_1,\chi_1),(G_2,v_2,\chi_2))<1/4\text{~ and ~}d_{\Omega}((G_1',v_1',\chi_1'),(G_2',v_2',\chi_2'))<1/4,$$ by~\Cref{aeinjective}, we can assume $\chi_1,\chi_2$ are injective. Thus $\{\chi_\Omega(e_n)\}$ $=\{\chi_{n}(e)\mid v_n\in e \}\cap \{\chi_{n}'(e)\mid v_n'\in e\}$ for $n= 1,2$. However, we have $\{\chi_{1}(e)\mid v_1\in e \}=\{\chi_2(e)\mid v_2\in e \}$ and $\{\chi_{1}'(e)\mid v_1'\in e \}=\{\chi_2'(e)\mid v_2'\in e \}$. Thus $\chi_\Omega(e_1)=\chi_\Omega(e_2)$.

Note that $G[c_{(r)^{-1}(F)}]_x=G[c_{j_k,r}^{-1}(F)]_x$ for all $F\subseteq [k]$ and $x\in \{(G_{j_k},v,\chi_{G_{j_k}}): v\in V(G_{j_k})\}$. Moreover, $G[c_{(r)^{-1}(F)}]_x$ is a continuous function with respect to $x$ in $\Omega$.
Thus again it follows that for all $F\subseteq [k]$:
    \begin{align*}
        \lim_{k\rightarrow\infty}\rho_{G_{j_k}}\circ c_{j_k,r}^{-1}(F)&=\lim_{k\rightarrow\infty}\sum_{v\in V(G)}\frac{1}{|V(G)|}(1-\dfrac{1}{|G[c_{j_k,r}^{-1}(F)]_x|})=\lim_{k\rightarrow\infty}\int_{\Omega} 1-\dfrac{1}{|\mathbf{G}[c_{j_k,r}^{-1}(A)]_x|}  d\mu_{G_{j_k}}\\
        &=\lim_{k\rightarrow\infty}\int_{\Omega} 1-\dfrac{1}{|\mathbf{G}[c_{(r)}^{-1}(A)]_x|}  d\mu_{G+{j_k}}=\int_{\Omega} 1-\dfrac{1}{|\mathbf{G}[c_{(r)}^{-1}(A)]_x|}  d\mu_{G}=\rho_{\mathbf{G}}\circ c_{(r)}^{-1}(F).
    \end{align*}
    
Since $ \lim_{r\rightarrow\infty}\rho_{G_i}\circ c_{i,r}^{-1}=\rho_{G_i}\circ c_i^{-1}$ and $\rho_{G_i}\circ c_i^{-1}$ converges to some $f\in\underset{i\rightarrow\infty}{\lim} \mathcal{Q}_k(G_i)$, it follows that $\rho_{n}\circ c_{(r)}^{-1}$ converges to $f$ and so $f\in \overline{\mathcal{Q}_k(\mathbf{G})}$.    
\end{proof}

Therefore, $\rho_{\mathbf{G}}$ is the quotient limit of $\{\rho_{G_n}\}_{n=1}^\infty$.
\end{proof}

\section{Concluding Remarks}\label{concluding}

In~\Cref{quotientconvergentlimit}, there exists a condition that the graph sequence is of bounded degree. We wonder if we can delete this condition, or if there exists a counterexample if the graph sequence is not of bounded degree.
\begin{ques}
    If $\{G_i\}$ is a sequence of finite graphs with rank functions $\{\rho_i\}$ quotient converging to some set function $\rho$. Does there exists a graphing $G$ such that $\rho=\rho_\mathbf{G}$ under the quotient distance?
\end{ques}

\nocite{*}
\bibliographystyle{amsplain}
\bibliography{ref}

\end{document}